\documentclass[12pt]{article}

\usepackage{amsmath}
\usepackage{amsthm}
\usepackage{amsfonts}
\usepackage{mathrsfs}
\usepackage{stmaryrd}
\usepackage{setspace}
\usepackage{fullpage}
\usepackage{amssymb}
\usepackage{breqn}
\usepackage{enumitem}
\usepackage{bbold}
\usepackage{authblk}
\usepackage{comment}
\usepackage{hyperref}
\usepackage{pgf,tikz}
\usepackage{graphicx}
\usepackage{subcaption}

\newtheorem{thm}{Theorem}[section]

\newtheorem{lemma}[thm]{Lemma}

\newtheorem{prop}[thm]{Proposition}

\newtheorem{defin}[thm]{Definition}

\newtheorem{clm}[thm]{Claim}

\newcommand\ex{\ensuremath{\mathrm{ex}}}

\newcommand\cF{{\mathcal F}}
\newcommand\cG{{\mathcal G}}
\newcommand\cH{{\mathcal H}}

\newcommand\cN{{\mathcal N}}

\newtheorem*{thm*}{Theorem}
\newtheorem*{prop*}{Proposition}
\newcommand{\ignore}[1]{}

\title{The Tur\'{a}n number of Berge paths}
\author{
Xin Cheng\thanks{\small School of Mathematics and Statistics, Northwestern Polytechnical University and Xi'an-Budapest Joint Research Center for Combinatorics, Xi'an 710129, Shaanxi, P.R. China. Email:
\small \texttt{xincheng@mail.nwpu.edu.cn}.}\,, \hspace{0.2em}
D\'{a}niel Gerbner\thanks{\small Alfr\'ed R\'enyi Institute of Mathematics. Email:
\small \texttt{gerbner.daniel@renyi.hu}.}\,, \hspace{0.2em} 
Hilal Hama Karim$^{\dagger,}$\thanks{Department of Computer Science and Information Theory, Faculty of Electrical Engineering and Informatics, Budapest University of Technology and Economics, Műegyetem rkp. 3., H-1111 Budapest, Hungary. E-mail: \texttt{hilal.hamakarim@edu.bme.hu}.}\,, \hspace{0.2em}

Shujing Miao\thanks{\small School of Mathematics and Statistics, and Hubei Key Lab--Math. Sci., Central China Normal University, Wuhan 430079, China. Email:
\small \texttt{sjmiao2020@sina.com}.}\,, \hspace{0.2em}
Junpeng Zhou\thanks{\small \textit{Corresponding author.} Department of Mathematics, Shanghai University, Shanghai 200444, P.R. China. Email:
\small \texttt{junpengzhou@shu.edu.cn}.} \thanks{\small Newtouch Center for Mathematics of Shanghai University, Shanghai 200444, P.R. China.}}

\date{}

\begin{document}

\maketitle

\begin{abstract} 
A Berge path of length $k$ in an $r$-uniform hypergraph is a collection of $k$ hyperedges $h_1,\dots,h_k$ and $k+1$ vertices $v_1,\dots,v_{k+1}$ such that $v_i, v_{i+1}\in h_i$ for each $1\le i\le k$. Gy\H{o}ri, Katona and Lemons [\textit{European J. Combin. 58 (2016) 238--246}] generalized the Erd\H{o}s-Gallai theorem to Berge paths and established bounds for the Tur\'{a}n number of Berge paths. However, these bounds are sharp only when some divisibility conditions hold. Gy\H ori, Lemons, Salia and Zamora [\textit{J. Combin. Theory Ser. B 148 (2021) 239--250}] determined the exact value of the Tur\'{a}n number of Berge paths in the case $k\le r$. In this paper, we settle the final open case $k>r$, thereby completing the determination of the Tur\'{a}n number of Berge paths.  
\end{abstract}

{\noindent{\bf Keywords}: Tur\'{a}n number, hypergraph, Berge path}

{\noindent{\bf AMS subject classifications:} 05C35, 05C65}

\section{Introduction}
A \textit{hypergraph} $\cH=(V(\cH),E(\cH))$ consists of a vertex set $V(\cH)$ and a hyperedge set $E(\cH)$, where each hyperedge in $E(\cH)$ is a nonempty subset of $V(\cH)$. If $|e|=r$ for every $e\in E(\cH)$, then $\cH$ is called an \textit{$r$-uniform hypergraph} ($r$-graph for short). For simplicity, let $e(\cH):=|E(\cH)|$. The \textit{degree} $d_{\cH}(v)$ of a vertex $v$ is the number of hyperedges containing $v$ in $\cH$. Let $\cH_1\cup \cH_2$ denote the disjoint union of hypergraphs $\cH_1$ and $\cH_2$, and $k\cH$ denote the disjoint union of $k$ hypergraphs $\cH$. 

Let $\mathcal{F}$ be a family of $r$-graphs. An $r$-graph $\cH$ is called \textit{$\mathcal{F}$-free} if $\cH$ does not contain any member in $\mathcal{F}$ as a subhypergraph. The \textit{Tur\'{a}n number} ${\rm{ex}}_r(n,\mathcal{F})$ of $\mathcal{F}$ is the maximum number of hyperedges in an $\mathcal{F}$-free $r$-graph on $n$ vertices. If $\mathcal{F}=\{\cG\}$, then we write ${\rm{ex}}_r(n,\cG)$ instead of ${\rm{ex}}_r(n,\{\cG\})$. When $r=2$, we write $\mathrm{ex}(n,\mathcal{F})$ instead of $\mathrm{ex}_2(n,\mathcal{F})$. The study of this function is a central theme in extremal combinatorics. Tur\'{a}n's theorem \cite{Tu} determines its exact value when $\cG$ is a complete graph $K_k$, and the Erd\H{o}s-Stone-Simonovits theorem \cite{ESi,ESt} settles asymptotically the case when a graph $\cG$ has chromatic number at least 3. The Erd\H{o}s-Gallai theorem states that a graph of order $n$ containing no $P_k$ as a subgraph contains at most $\frac{(k-1)n}{2}$ edges, where $P_k$ denote a path with $k$ edges. In 2016, Gy\H{o}ri, Katona and Lemons \cite{B6} defined the concept of Berge paths and generalized the Erd\H{o}s-Gallai theorem to Berge paths. 

\begin{defin}[Gy\H{o}ri, Katona and Lemons \cite{B6}]
A Berge path of length $k$ in an $r$-uniform hypergraph is a collection of $k$ hyperedges $h_1,\dots,h_k$ and $k+1$ vertices $v_1,\dots,v_{k+1}$ such that for each $1\le i\le k$ we have $v_i, v_{i+1}\in h_i$. 
\end{defin}

We refer to $\{v_1,\dots,v_{k+1}\}$ as the set of \textit{defining vertices} and to $\{h_1,\dots h_k\}$ as the set of \textit{defining hyperedges}. Note that for a fixed path $P_k$ there are many hypergraphs that are a Berge-$P_k$. For convenience, we refer to this collection of hypergraphs as ``Berge-$P_k$''. 

The Turán number of Berge paths of length $k$ in $r$-graphs was studied by Gy\H{o}ri, Katona and Lemons \cite{B6}, who proved sharp bounds for almost every pair $k,r$. The missing case when $k=r+1>2$ was settled by Davoodi, Gy\H{o}ri, Methuku and Tompkins \cite{10-10A1}.

\begin{thm}[Gy\H{o}ri, Katona, Lemons \cite{B6}, Davoodi, Gy\H{o}ri, Methuku, Tompkins \cite{10-10A1}]\label{gykl} 
\item[\textbf{(i)}] If $k\ge r+1>3$, then ${\rm{ex}}_r(n,{\rm Berge}{\text -}P_k)\leq \frac{n}{k}\binom{k}{r}$. Furthermore, this bound is sharp whenever $k$ divides $n$.
\item[\textbf{(ii)}] If $r\geq k>2$, then ${\rm{ex}}_r(n,{\rm Berge}{\text -}P_k)\leq \frac{n(k-1)}{r+1}$. Furthermore, this bound is sharp whenever $r+1$ divides $n$.
\end{thm}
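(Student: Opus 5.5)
We follow the strategy of Gy\H{o}ri, Katona and Lemons. Take a Berge-$P_k$-free $r$-graph $\cH$ on $n$ vertices and reduce to graphs by an appropriate embedding. We may assume $\cH$ is connected, since both bounds are linear in $n$ and are additive over components.

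For part (i), the plan is to assign to each hyperedge $h$ a pair of vertices of $h$. Choose a maximal matching-type system of distinct representatives $h\mapsto e_h\subset h$, $|e_h|=2$, with all $e_h$ distinct. Let $\cH_1$ be the hyperedges that receive a pair and $\cH_2$ the rest. The graph $G=\{e_h: h\in\cH_1\}$ is $P_k$-free: a path in $G$ lifts to a Berge path through the distinct hyperedges $h$. By Erd\H{o}s--Gallai, $|\cH_1|\le \frac{(k-1)n}{2}$. Each $h\in\cH_2$ has all $\binom r2$ of its pairs already used, so its vertex set spans a clique in $G$. We then run a component-by-component analysis. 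Let $C$ be a connected component of $G$ with $c$ vertices.
\begin{itemize}
\item If $c\le k$, the number of hyperedges living in $C$ is at most $\binom{c}{r}\le \frac{c}{k}\binom{k}{r}$, with equality iff $c=k$.
\item If $c>k$, we use the Kopylov/Erd\H{o}s--Gallai structure for connected $P_k$-free graphs. Long paths limit the cliques, and hence the $\cH_2$-hyperedges, and we must show
\[
e_G(C)+|\cH_2(C)|\le \frac{c}{k}\binom{k}{r}.
\]
Here we use that $\binom{k}{r}/k\ge \frac{k-1}{2}$ when $k\ge r+1$ and $r\ge 3$. This inequality gives slack for the graph edges, and cliques of size $r$ in a connected $P_k$-free graph on many vertices are sparse.
\end{itemize}

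The main obstacle is the case $k=r+1$. There $\binom{k}{r}/k=1$ while Erd\H{o}s--Gallai gives $(k-1)/2=r/2>1$, so the graph bound alone is too weak. We must exploit that each hyperedge of size $r=k-1$ together with any extra pair already creates long Berge paths. For this case we would follow Davoodi--Gy\H{o}ri--Methuku--Tompkins: show every connected component of $\cH$ with more than $k$ vertices has at most as many hyperedges as vertices minus something. The key step is proving that two intersecting hyperedges plus a third attached hyperedge yield a Berge-$P_{r+1}$ unless the component is tiny. Sharpness in (i) comes from $\frac nk$ disjoint copies of $K^{(r)}_k$, since each copy has only $k$ vertices and so contains no Berge path of length $k$.

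For part (ii), $r\ge k$, the plan is to show each connected component $C$ of $\cH$ with $c$ vertices has at most $\frac{c(k-1)}{r+1}$ hyperedges. A connected component with $m$ hyperedges spans at least $r+m-1$ vertices unless hyperedges overlap heavily. We split on $m$.
\begin{itemize}
\item If $m\ge k$, use connectivity to build a Berge path greedily: since $r\ge k$, each hyperedge has enough vertices to choose fresh defining vertices. This yields a Berge-$P_k$ once $m\ge k$ and the component is not too overlapping. Hence $m\le k-1$.
\item If $m\le k-1$, then $c\ge r+1$ when $m\ge 2$, and the bound $m\le k-1\le \frac{c(k-1)}{r+1}$ follows. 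The component with $m=1$ has $c=r$, and $1\le \frac{r(k-1)}{r+1}$ holds for $k>2$.
\end{itemize}
Summing over components gives the bound. Sharpness comes from disjoint copies of $k-1$ hyperedges on $r+1$ vertices.
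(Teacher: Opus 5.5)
Your two sharpness constructions are exactly the ones the paper describes, but both upper-bound arguments have genuine gaps. The paper does not reprove this theorem. It cites it and notes that the bound in (i) was obtained in \cite{B6} by finding a good set $S$, i.e.\ one with $|N_\cH(S)|\le |S|\binom{\ell}{r-1}/r$, and inducting; Lemma \ref{lemmi} reworks and corrects that argument. The paper also notes that the red-blue Lemma \ref{lem2.1} can be used.

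\textbf{Part (i).} Your matching only gives $e(\cH)\le e(G)+\cN(K_r,G)$. This counts every pair of an $\cH_2$-hyperedge twice, once as a matched edge and once inside a $K_r$, so for $\frac nk K_k$ you only get $\frac nk\bigl(\binom k2+\binom kr\bigr)$. The component-by-component repair does not fix this:
\begin{itemize}
\item A hyperedge of $\cH_1$ contains only its representative pair, so it can leave the component $C$ of $G$. A single hyperedge already violates your first bullet: $C$ is one edge, $c=2$, one hyperedge sits on it, yet $\binom 2r=0$.
\item The case $c>k$ is only asserted.
\item The inequality $\binom kr/k\ge\frac{k-1}{2}$ you invoke is false at $k=r+1$, as you yourself note later.
\end{itemize}
For $k\ge r+2$, what works is the genuine red-blue bound $e(\cH)\le e(G_r)+\cN(K_r,G_b)$ from Lemma \ref{lem2.1}, followed by a vertex-contribution count. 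Set $c(v)=k(v)/r+d_r(v)/2$. The blue neighbourhood of $v$ is $P_{k-1}$-free, so $k(v)\le \frac{d_b(v)}{k-1}\binom{k-1}{r-1}$, as in (\ref{eq1}). Lemma \ref{lemi}(i) then gives $c(v)\le \frac{d(v)}{k-1}\binom{k-1}{r-1}/r$, and Erd\H{o}s--Gallai finishes. For $k=r+1$ you give no argument, and the step you call key is impossible: three hyperedges can never form a Berge path of length $r+1\ge 4$.

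\textbf{Part (ii).} Your claim that a connected component with $m\ge k$ hyperedges must contain a Berge-$P_k$ is false, so the per-component bound $m\le k-1$ is not available. Take any number $m$ of hyperedges pairwise meeting exactly in one common vertex $x$. This is a connected $r$-graph with no Berge-$P_3$, since both interior defining vertices of such a path would have to equal $x$. The proviso ``not too overlapping'' is where all the content lies, and you do not supply it. The missing ingredient is a deletion/induction step. Find a nonempty set $S$ with $|N_\cH(S)|\le \frac{(k-1)|S|}{r+1}$ and apply induction to $\cH-S$. In the sunflower, the $r-1$ private vertices of one petal lie in a single hyperedge, and $1\le \frac{2(r-1)}{r+1}$. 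Producing such sets from a longest Berge path is the kind of analysis done in Lemma 1 of \cite{GLSZ}, of which Lemma \ref{lemnew3} is a variant.
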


Both upper bounds in Theorem \ref{gykl} are sharp as shown by the following examples. In the case when $k>r$, suppose that $k$ divides $n$ and partition the $n$ vertices into sets of size $k$. In each $k$-set, take all possible subsets of size $r$ as hyperedges of the hypergraph. The resulting $r$-graph has exactly $\frac{n}{k}\binom{k}{r}$ hyperedges and clearly contains no copy of any Berge-$P_k$. In the case when $k\le r$ and $r+1$ divides $n$, we partition the $n$ vertices into sets of size $r+1$, and from each $(r+1)$-set, select exactly $k-1$ of its subsets of size $r$ as hyperedges of the hypergraph. The resulting $r$-graph has exactly $\frac{k-1}{r+1}n$ hyperedges. Since each component contains exactly $k-1$ hyperedges, it is clear that there are no Berge-$P_k$. 

Observe however that these bounds are sharp only in the case the above divisibility conditions hold. Gy\H ori, Lemons, Salia and Zamora \cite{GLSZ} showed that ${\rm{ex}}_r(n,{\rm Berge}{\text -}P_k)=\left\lfloor \frac{n}{r+1}\right\rfloor(k-1)+\mathbf{1}_{r+1\,|\,n+1}$ if $3\le k\le r$, where $\mathbf{1}_{r+1\,|\,n+1}=1$ if $r+1\,|\,n+1$, and $\mathbf{1}_{r+1\,|\,n+1}=0$ otherwise. Note that if $k=2$, then ${\rm{ex}}_r(n,{\rm Berge}{\text -}P_2)=\left\lfloor \frac{n}{r}\right\rfloor$. 

Here, we complete the study of the Tur\'{a}n number of Berge paths by the following theorem. 

\begin{thm}\label{main}
    Let $k\ge r+1$ and $n=pk+q$ with $q<k$. Then $\ex_r(n,\textup{Berge-}P_k)=p\binom{k}{r}+\binom{q}{r}$.
\end{thm}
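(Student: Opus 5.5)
The lower bound comes from the construction already described after Theorem \ref{gykl}. Take $p$ disjoint complete $r$-graphs on $k$ vertices and one complete $r$-graph on the remaining $q$ vertices. A Berge path stays inside one component. A component on $m\le k$ vertices has at most $m$ vertices available, so it contains no Berge path with $k+1$ distinct defining vertices. This gives $p\binom{k}{r}+\binom{q}{r}$ hyperedges.

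For the upper bound I would induct on $n$, working component by component. Because the claimed bound is not linear in $n$ when $q>0$, I first need a statement about single components. Call a connected Berge-$P_k$-free $r$-graph on $m$ vertices a \emph{component}. The key lemma is:
\[
e\le\binom{m}{r}\ \text{ if } m\le k,\qquad e\le \frac{m}{k}\binom{k}{r}-c\ \text{ if } m>k,
\]
where $c>0$ is a slack large enough to absorb the rounding. The natural target is $e\le \frac{(m-1)}{k-1}\binom{k-1}{r}+\dots$, in the spirit of Kopylov's theorem for connected graphs. Concretely, I would aim for $e\le \max\{\binom{k-1}{r}+(m-k+1)\binom{\lfloor (k-1)/2\rfloor}{r-1},\ \dots\}$-type bounds. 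These show that a connected component with $m>k$ vertices is strictly less efficient per vertex than $K_k^{(r)}$, and in fact that $e\le (m-k)\frac{1}{k}\binom{k}{r}+\binom{k}{r}$ minus something.

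Granting such a per-component bound $f(m)$, the global statement reduces to a numerical check. I would show that $\sum_i f(m_i)$ with $\sum m_i=n$ is maximized by taking as many components of size exactly $k$ as possible plus one leftover of size $q$. This uses superadditivity-type inequalities:
\begin{itemize}
\item $\binom{a}{r}+\binom{b}{r}\le\binom{k}{r}+\binom{a+b-k}{r}$ for $a,b\le k\le a+b$, by convexity of $\binom{x}{r}$;
\item $f(m)\le \binom{k}{r}+f(m-k)$ for $m>k$.
\end{itemize}

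The main obstacle is the connected case $m>k$. Here I would use a longest-path argument in the style of Gy\H{o}ri--Katona--Lemons and Davoodi et al. Take a longest Berge path $v_1,h_1,\dots,h_\ell,v_{\ell+1}$ with $\ell\le k-1$. Every hyperedge containing an endpoint has all its vertices among the defining vertices, or else the path extends. A rotation (P\'osa-type) argument then shows that hyperedges are concentrated on at most $k$ vertices. Since the component is connected and has more than $k$ vertices, some vertex outside forces the longest path to be a non-cycle. In that situation I would delete a low-degree vertex, or a pendant structure of degree at most $\binom{k-2}{r-1}$-ish (strictly less than $\frac1k\binom{k}{r}$ on average), and induct on $m$.

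I expect the delicate part to be small $r$ relative to $k$, and the boundary case $k=r+1$. There the average $\frac{1}{k}\binom{k}{r}=\frac{\binom{k-1}{r-1}}{r}$ is small and the gap must be checked carefully. I would handle it separately, reusing the Davoodi--Gy\H{o}ri--Methuku--Tompkins argument for $k=r+1$.
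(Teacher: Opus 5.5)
\textbf{There is a genuine gap: the connected case for $m>k$, which carries all the difficulty, has no working argument.}

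Your lower bound is correct. Your reduction to connected hypergraphs is also correct: superadditivity of $F(n)=p\binom{k}{r}+\binom{q}{r}$ follows from convexity of $\binom{x}{r}$, and the paper makes the same reduction. But the per-component bound is equivalent to the theorem itself, and for $m>k$ the Kopylov-type inequality is only gestured at. The induction you sketch also breaks down, for three reasons.
\begin{itemize}
\item \emph{Single-vertex deletion cannot close the induction.} Suppose you delete one vertex from a component on $m=pk+q$ vertices and bound the rest by $F(m-1)$. Then the deleted vertex may have degree at most $F(m)-F(m-1)=\binom{q-1}{r-1}$. This is $0$ for $q<r$ and $1$ for $q=r$, so no low-degree vertex in a connected hypergraph suffices.
\item \emph{Your degree threshold is too weak.} We have $\binom{k-2}{r-1}=\frac{k-r}{k-1}\binom{k-1}{r-1}$, which is strictly larger than $\frac1k\binom{k}{r}=\binom{k-1}{r-1}/r$ for every $k\ge r+2$. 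So deleting such vertices would not even recover the linear Gy\H{o}ri--Katona--Lemons bound.
\item \emph{The rotation claim is false.} It is not true that the hyperedges concentrate on at most $k$ vertices. Take a set $A$ of $\lfloor (k-1)/2\rfloor\ge r-1$ vertices, and all hyperedges consisting of $r-1$ vertices of $A$ and one vertex outside $A$. This is a connected Berge-$P_k$-free hypergraph whose hyperedges cover every vertex.
\end{itemize}

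\textbf{The missing idea is to remove exactly $k$ vertices incident to at most $k\binom{k-1}{r-1}/r=\binom{k}{r}$ hyperedges.} Then the target drops by exactly $\binom{k}{r}$ and $q$ is unchanged.
\begin{itemize}
\item The paper assembles such a set greedily from ``good'' sets $S$ of size $1$, $2$ or $3$ with $|N_\cH(S)|\le |S|\binom{\ell}{r-1}/r$. These are supplied by a corrected version of the Gy\H{o}ri--Katona--Lemons longest-path argument (Lemma~\ref{lemmi}), choosing the parity of the first set to match $k$.
\item When the process stalls, the paper shows that $\cH$ is a single component $W_1$ on fewer than $k$ vertices plus a set $S'$ of fewer than $k$ low-degree vertices, so $n\le 2k-2$. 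It then counts directly, with a case analysis for $q\in\{r,r+1\}$ that uses, among other tools, the Kostochka--Luo--McCourt hamiltonian-connectedness theorem.
\item The cases $q=1$ and $q\ge r+2$ go instead through the red-blue graph reduction (Lemma~\ref{lem2.1}) and a Karamata argument (Theorem~\ref{thm2.4}).
\item For $k=r+1$, the Davoodi--Gy\H{o}ri--Methuku--Tompkins argument only yields $e(\cH)\le n$. This exceeds the true value $n-q+\binom{q}{r}$ whenever $q\ge 1$. The paper needs new structural lemmas (Lemmas~\ref{lemnew1}--\ref{lemnew3}) and, for $q=r-1$, a result of Gy\H{o}ri, Lemons, Salia and Zamora.
\end{itemize}
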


This also strengthens a result of Chakraborti and Chen \cite{chch}. 
Let $\mathcal{N}(G,H)$ denote the number of copies of $G$ in the graph $H$.

\begin{thm}[Chakraborti and Chen \cite{chch}]\label{cc}
For any positive integers $n$ and $3\leq r \leq k$, if $G$ is a $P_k$-free graph on $n$ vertices, then $\cN(K_r,G)\leq \cN(K_r,pK_{k}\cup K_q)$, where $n=pk+q$, $0\leq q\leq k-1$. 
\end{thm}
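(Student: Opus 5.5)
We prove the statement by strong induction on $n$ and split according to whether $G$ is connected. Throughout we write $f(n):=\cN(K_r,pK_k\cup K_q)=p\binom{k}{r}+\binom{q}{r}$ for $n=pk+q$, $0\le q\le k-1$. Recall that $P_k$ has $k$ edges, hence $k+1$ vertices.

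\emph{Base case.} If $n\le k$, then $p=0$ and $q=n$, except when $n=k$, where $p=1$, $q=0$. In both cases $f(n)=\binom{n}{r}$, and trivially $\cN(K_r,G)\le\binom{n}{r}$.

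\emph{Disconnected case.} Suppose $G$ has components $G_1,\dots,G_m$ with $m\ge 2$ and orders $n_1,\dots,n_m$. Each component is $P_k$-free and smaller, so by induction $\cN(K_r,G)\le\sum_i f(n_i)$. It therefore suffices to prove the superadditivity $f(a)+f(b)\le f(a+b)$. Write $a=p_1k+q_1$ and $b=p_2k+q_2$, so that
\[
f(a)+f(b)=(p_1+p_2)\binom{k}{r}+\binom{q_1}{r}+\binom{q_2}{r}.
\]
\begin{itemize}
\item If $q_1+q_2<k$, use $\binom{q_1}{r}+\binom{q_2}{r}\le\binom{q_1+q_2}{r}$.
\item If $q_1+q_2\ge k$, use convexity of $x\mapsto\binom{x}{r}$ to push mass from the smaller of $q_1,q_2$ to the larger. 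This gives $\binom{q_1}{r}+\binom{q_2}{r}\le\binom{k}{r}+\binom{q_1+q_2-k}{r}$.
\end{itemize}
In either case the right-hand side is exactly $f(a+b)$.

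\emph{Connected case, $n\ge k+1$.} Here we invoke the known clique version of Kopylov's theorem for connected graphs (Luo's theorem). It says that a connected $n$-vertex graph with no path on $k+1$ vertices satisfies
\[
\cN(K_r,G)\le\max\Big\{\binom{k-t}{r}+(n-k+t)\binom{t}{r-1}:\ t\in\{1,\lfloor (k-1)/2\rfloor\}\Big\}.
\]
The extremal graphs are $K_{k-t}$ (a clique on $k-2t$ vertices joined to $t$ universal vertices) together with $n-k+t$ further vertices attached to those $t$ vertices. So it remains to check the inequality
\[
\binom{k-t}{r}+(n-k+t)\binom{t}{r-1}\le p\binom{k}{r}+\binom{q}{r}\qquad (n\ge k+1)
\]
for both admissible values of $t$.

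\emph{Checking the inequality.} The slope in $n$ of the left-hand side is $\binom{t}{r-1}$. This is at most $\binom{k}{r}/k=\binom{k-1}{r-1}/r$ whenever $t\le (k-1)/2$ and $r\ge 3$. On average, $f$ grows at rate $\binom{k}{r}/k$ per vertex. Since both sides are controlled linearly, it suffices to verify the inequality on the first window $k+1\le n\le 2k$, and then step $n\mapsto n+k$.

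\emph{Main obstacle.} The window $k+1\le n\le 2k$ is the delicate part, where $f(n)=\binom{k}{r}+\binom{n-k}{r}$.
\begin{itemize}
\item For $t=1$ the left-hand side is $\binom{k-1}{r}+(n-k+1)\mathbf{1}_{r=2}$, which equals $\binom{k-1}{r}$ because $r\ge3$. This is clearly at most $\binom{k}{r}$.
\item For $t=\lfloor (k-1)/2\rfloor$ the left-hand side is at most $\binom{k-t}{r}+k\binom{t}{r-1}$. I would compare this with $\binom{k}{r}$ via the identity
\[
\binom{k}{r}-\binom{k-t}{r}=\sum_{j=k-t}^{k-1}\binom{j}{r-1}\ge t\binom{k-t}{r-1}.
\]
It then remains to show $t\binom{k-t}{r-1}\ge (n-k+t)\binom{t}{r-1}$. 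This uses $k-t\ge t+1$ and $r-1\ge2$, so that $\binom{k-t}{r-1}/\binom{t}{r-1}$ is large enough to absorb the factor $(n-k+t)/t\le (k+t)/t$.
\end{itemize}
I expect this last estimate to be the main obstacle, particularly for small $k$ relative to $r$ (for instance $k=r$ or $k=r+1$, where $t<r-1$ makes $\binom{t}{r-1}=0$ and the bound becomes trivial). I would handle those boundary values separately.
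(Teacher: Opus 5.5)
\textbf{There is a genuine gap: the window inequality, which is the only step that needs real proof, rests on an estimate that is false.}

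Your architecture is sound and differs from the route the paper alludes to. The paper only cites this result; Chakraborti--Chen bound the $K_r$'s through each vertex via its $P_{k-1}$-free neighbourhood and finish with Karamata. You instead use superadditivity of $f$, Luo's connected clique bound, and a reduction to the window $k+1\le n\le 2k$ via the slope bound $k\binom{t}{r-1}\le\binom{k}{r}$. The superadditivity, the $t=1$ case and the reduction to the window are all fine.

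The problem is the window inequality for $t=\lfloor (k-1)/2\rfloor\ge r-1$. You reduce it to $t\binom{k-t}{r-1}\ge (n-k+t)\binom{t}{r-1}$, and this is false. When $k-t=t+1$, the ratio $\binom{k-t}{r-1}/\binom{t}{r-1}=\frac{t+1}{t-r+2}$ tends to $1$, not to anything near $(k+t)/t\approx 3$. Concretely, $r=3$, $k=7$, $t=3$, $n=11$ gives $18<21$.

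More fundamentally, comparing the Luo bound with $\binom{k}{r}$ alone, discarding $\binom{n-k}{r}$, cannot work for $r=3$. For $k=11$, $t=5$, $n=21$ one has $\binom{6}{3}+15\binom{5}{2}=170>165=\binom{11}{3}$. The real target $f(21)=165+\binom{10}{3}=285$ is still comfortably larger, so the inequality you need is true, but your argument does not reach it.

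\textbf{Repair.} You must keep the term $\binom{n-k}{r}$. Put $s=n-k$; you need
$(s+t)\binom{t}{r-1}\le\binom{k}{r}-\binom{k-t}{r}+\binom{s}{r}$.
\begin{itemize}
\item For $t\ge r-1$, each factor $\frac{t+i-m}{t-m}$ of $\binom{t+i}{r-1}/\binom{t}{r-1}$ is at least $1+i/t$. Hence $\binom{t+i}{r-1}\ge(1+i/t)^2\binom{t}{r-1}\ge(1+2i/t)\binom{t}{r-1}$.
\item Since $k-t\ge t+1$, this gives $\binom{k}{r}-\binom{k-t}{r}=\sum_{j=k-t}^{k-1}\binom{j}{r-1}\ge\sum_{i=1}^{t}\binom{t+i}{r-1}\ge(2t+1)\binom{t}{r-1}$.
\item Also $\binom{s}{r}=\sum_{j=0}^{s-1}\binom{j}{r-1}\ge(s-t)\binom{t}{r-1}$, trivially so if $s<t$.
\end{itemize}
Adding the last two bounds gives the window inequality, and your stepping argument $n\mapsto n+k$ then completes the proof. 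The price of your route is that it relies on Luo's theorem, which is considerably heavier machinery than the neighbourhood-plus-Karamata argument.
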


Given graphs $G$ and $F$, the \textit{generalized Tur\'{a}n number} $\ex(n,G,F)$ is the largest number of copies of $G$ in $F$-free $n$-vertex graphs. For a very recent survey on generalized Tur\'{a}n problems, one may refer to the work of Gerbner and Palmer \cite{GePa}. 

Berge \cite{A2} defined the Berge cycle, and a \textit{Berge cycle} of length $k$ in an $r$-uniform hypergraph is an alternating sequence of distinct vertices and hyperedges of the form $v_1, h_1, v_2, h_2,$ $\ldots, v_k, h_k, v_1$ where $v_i, v_{i+1} \in h_i$ for each $i \in \{1, 2, \ldots, k - 1\}$ and $v_k, v_1 \in h_k$. Similarly, we refer to $\{v_1,\dots,v_{k}\}$ as the set of \textit{defining vertices} and to $\{h_1,\dots h_k\}$ as the set of \textit{defining hyperedges}.
Gerbner and Palmer \cite{B3} generalized the established concepts of Berge cycle and Berge path to general graphs. Let $F$ be a graph. An $r$-graph $\cH$ is a \textit{Berge-$F$} if there is a bijection $\phi: E(F)\rightarrow E(\cH)$ such that $e\subseteq \phi(e)$ for each $e\in E(F)$. For a fixed graph $F$ there are many hypergraphs that are a Berge-$F$. Similarly, we refer to this collection of hypergraphs as ``Berge-$F$''. 

A simple connection between generalized Tur\'an problems and Berge hypergraphs is $\ex(n,K_r,F)\le \ex_r(n,\textup{Berge-}F)$. Indeed, if we take an $n$-vertex $F$-free graph with $\ex(n,K_r,F)$ copies of $K_r$ and add a hyperedge on each copy of $K_r$, then the resulting $r$-graph is clearly Berge-$F$-free. Therefore, Theorem \ref{main} is indeed a strengthening of Theorem \ref{cc}. Note that Chakraborti and Chen \cite{chch} also determined the cases where equality is achieved.

The rest of this paper is organized as follows. Similarly to Theorem \ref{gykl}, the case $k=r+1$ needs a different proof, which we give in Section \ref{smallk}. In Section \ref{bigk}, we prove Theorem \ref{main} in the case $k>r+1$.

\section{Proof of Theorem \ref{main} for $k=r+1$}\label{smallk}

\begin{lemma}\label{lemnew1}
    In a Berge-$P_{r+1}$-free, Berge-$C_{r+1}$-free $r$-graph $\cH$, the endpoints of a Berge-$P_r$ are contained only in hyperedges that are defining hyperedges of the Berge-$P_r$. 
    
\end{lemma}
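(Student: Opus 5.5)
Let the Berge-$P_r$ have defining vertices $v_1,\dots,v_{r+1}$ and defining hyperedges $h_1,\dots,h_r$, with $v_i,v_{i+1}\in h_i$. By symmetry it suffices to treat the endpoint $v_1$. We argue by contradiction: suppose some hyperedge $h\notin\{h_1,\dots,h_r\}$ contains $v_1$. We will use $h$ either to extend the path, giving a Berge-$P_{r+1}$, or to close it, giving a Berge-$C_{r+1}$. Both are forbidden in $\cH$.

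The argument splits on whether $h$ contains a vertex outside the defining vertex set.

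\textbf{Case 1.} There is a vertex $u\in h\setminus\{v_1,\dots,v_{r+1}\}$. Then
\[
u, h, v_1, h_1, v_2, \dots, h_r, v_{r+1}
\]
is a Berge path of length $r+1$. Its defining vertices $u,v_1,\dots,v_{r+1}$ are distinct, and its hyperedges $h,h_1,\dots,h_r$ are distinct because $h$ is not a defining hyperedge. This contradicts Berge-$P_{r+1}$-freeness.

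\textbf{Case 2.} Otherwise $h\subseteq\{v_1,\dots,v_{r+1}\}$. Since $|h|=r$, the edge $h$ misses exactly one of these $r+1$ vertices.
\begin{itemize}
\item If $v_{r+1}\in h$, then $v_1,h_1,v_2,\dots,v_r,h_r,v_{r+1},h,v_1$ is a Berge cycle of length $r+1$. This contradicts Berge-$C_{r+1}$-freeness.
\item If $v_{r+1}\notin h$, then $h=\{v_1,\dots,v_r\}$. In particular $h$ contains $v_1$ and $v_r$. Here we reroute the path. We have $v_r,v_{r+1}\in h_r$, and $h_r\neq h$. Since $h$ is exactly the complement of $v_{r+1}$ in the defining vertex set, $h_r$ must be different from that set. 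As $v_{r+1}\in h_r$, this means $h_r$ misses some $v_j$ with $j\le r$.
\end{itemize}

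The key step is the second subcase. It is handled by a rerouting trick: use $h$ to jump to $v_r$ and then use the remaining edges. Concretely, consider
\[
v_{r+1}, h_r, v_r, h, v_1, h_1, v_2, \dots, h_{r-2}, v_{r-1}, h_{r-1}, v_r .
\]
This closes up as a cycle. Reading it as a cycle through the vertices $v_1,\dots,v_r$ only, namely
\[
v_1, h_1, v_2, \dots, v_{r-1}, h_{r-1}, v_r, h, v_1,
\]
gives a Berge cycle of length $r$ using $h_1,\dots,h_{r-1},h$. We then attach the pendant hyperedge $h_r$ at $v_r$, leading to $v_{r+1}$.

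Now open this cycle at $v_{r-1}$, so that it starts from the other side of $v_r$. This gives the path
\[
v_{r+1}, h_r, v_r, h, v_1, h_1, v_2, \dots, h_{r-2}, v_{r-1}.
\]
It has $r$ edges, namely $h_r,h,h_1,\dots,h_{r-2}$, so $h_{r-1}$ is unused. The edge $h_{r-1}$ contains $v_{r-1}$ and $v_r$. Since $v_r$ already appears on the path, appending $h_{r-1}$ does not directly give a new vertex. We therefore need a vertex of $h_{r-1}$ outside $\{v_1,\dots,v_{r+1}\}$, or else a cycle of length $r+1$.

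Suppose no such vertex exists. Then $h_{r-1}\subseteq\{v_1,\dots,v_{r+1}\}$, so it misses exactly one vertex. If $h_{r-1}\ni v_{r+1}$, then
\[
v_1,h_1,\dots,v_{r-1},h_{r-1},v_{r+1},h_r,v_r,h,v_1
\]
is a Berge-$C_{r+1}$, a contradiction. Otherwise $h_{r-1}=\{v_1,\dots,v_r\}=h$, which contradicts $h\neq h_{r-1}$.

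So some $w\in h_{r-1}$ lies outside $\{v_1,\dots,v_{r+1}\}$. Appending $h_{r-1}$ and $w$ to the rerouted path gives a Berge-$P_{r+1}$, a contradiction.

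I expect this second subcase to be the main obstacle. The index bookkeeping for distinctness of vertices and hyperedges needs care, and the edge cases $r$ small (e.g.\ $r=2$) should be rechecked. Together, the two cases show that every hyperedge containing $v_1$ is one of $h_1,\dots,h_r$.
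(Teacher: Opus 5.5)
Your proof is correct and follows the paper's strategy. Extend the path if $h$ has an outside vertex, close it into a Berge-$C_{r+1}$ if $h$ contains both endpoints, and in the remaining case $h=\{v_1,\dots,v_r\}$ reroute so that a different hyperedge becomes non-defining at an endpoint, which then forces it to coincide with $h$. The only difference is the rerouting: in your labeling, the paper simply uses $h$ in place of $h_1$ for the pair $v_1v_2$, keeping the vertex order and immediately forcing $h_1=\{v_1,\dots,v_r\}=h$, whereas your detour through $v_r$ that frees $h_{r-1}$ works equally well but is longer than needed.
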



\begin{proof}[\bf Proof] 
Consider the Berge path $P$ of length $r$ in $\cH$. Observe that any non-defining hyperedge that contains an endpoint of $P$ and a vertex outside the set of defining vertices of $P$ would create a Berge path of length $r+1$, and any non-defining hyperedge that contains both endpoints of $P$ would create a Berge cycle of length $r+1$.  
The only possible non-defining hyperedge $h$ is the one that contains all the defining vertices of $P$ except for an endpoint, say $v_1$. But then for the edge on the other end, say $v_rv_{r+1}$, we can change the defining hyperedge to $h$, and then we have a non-defining hyperedge containing an endpoint $v_{r+1}$. Based on the above analysis, this leads to a multiple hyperedge, and thus a contradiction. 
\end{proof}

\begin{lemma}\label{lemnew2}
    If a Berge-$P_{r+1}$-free, Berge-$C_{r+1}$-free $r$-graph $\cH$ contains a Berge-$C_r$, then it contains $r+1$ vertices that are contained in only $r$ hyperedges.
\end{lemma}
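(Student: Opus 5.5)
The plan is to start from a Berge-$C_r$ in $\cH$ and show that its $r$ defining vertices, together with one suitable extra vertex, lie only in its $r$ defining hyperedges. The tool is Lemma \ref{lemnew1}, applied to Berge-$P_r$'s obtained by ``opening'' the cycle and extending it by one hyperedge.

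Let $C = v_1,h_1,v_2,h_2,\dots,v_r,h_r,v_1$ be a Berge-$C_r$ in $\cH$, with indices taken modulo $r$, and put $V(C)=\{v_1,\dots,v_r\}$.

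\textbf{Step 1.} Each $h_i$ is an $r$-set. So either $h_i=V(C)$, or $h_i$ contains a vertex $u_i\notin V(C)$. The hyperedges are distinct (there are no multiple hyperedges), so $h_i=V(C)$ holds for at most one index $i$. Since $r\ge 2$, at least one $h_i$ contains an outside vertex; fix such an $i$ and set $u:=u_i$.

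\textbf{Step 2.} For every index $i$ with $h_i\neq V(C)$, consider the two Berge paths
\[ u_i,h_i,v_i,h_{i-1},v_{i-1},\dots,h_{i+1},v_{i+1} \quad\text{and}\quad u_i,h_i,v_{i+1},h_{i+1},v_{i+2},\dots,h_{i-1},v_i . \]
Each has $r$ distinct hyperedges, namely all of $h_1,\dots,h_r$, and $r+1$ distinct defining vertices $V(C)\cup\{u_i\}$. So each is a Berge-$P_r$.

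By Lemma \ref{lemnew1}, the endpoints $u_i$, $v_{i+1}$ and $v_i$ are contained only in hyperedges among $h_1,\dots,h_r$. Hence every $v_j$ incident to some $h_i\neq V(C)$ (namely $v_i$ and $v_{i+1}$) is contained only in $h_1,\dots,h_r$.

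\textbf{Step 3.} Every $v_j$ is covered by Step 2. If all $h_i\ne V(C)$ this is immediate. Otherwise exactly one, say $h_1=V(C)$, is excluded, and then $v_1$ and $v_2$ are still covered via $h_r$ (which gives $v_r,v_1$) and $h_2$ (which gives $v_2,v_3$); for $r=2$, $h_2$ alone covers both. Together with $u$, the $r+1$ vertices $V(C)\cup\{u\}$ are contained only in the $r$ hyperedges $h_1,\dots,h_r$, which is the claim.

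\textbf{Main obstacle.} The delicate point is making sure every $v_j$ is an endpoint of some Berge-$P_r$ whose defining hyperedges are exactly the $h$'s. This is why the possibility that one cycle hyperedge equals $V(C)$ must be handled, and it is handled by the distinctness of hyperedges. I would also double-check that the Berge-$C_{r+1}$-freeness hypothesis is only needed through Lemma \ref{lemnew1}, and that the constructed paths genuinely have distinct defining vertices.
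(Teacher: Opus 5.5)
Your proof is correct and follows the paper's argument: open the Berge-$C_r$ at a cycle hyperedge $h_i$ that contains a vertex outside $V(C)$, which gives Berge-$P_r$'s on the same $r$ hyperedges, and apply Lemma \ref{lemnew1} to their endpoints. Using both orientations, so that $v_i$ and $v_{i+1}$ are both endpoints, covers all of $V(C)$ even when one $h_i=V(C)$; this makes explicit the count of $r+1$ vertices that the paper's one-line argument leaves somewhat implicit.
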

\begin{proof}[\bf Proof]
If there is a Berge cycle $C$ of length $r$, then all but at most one of the defining hyperedges of $C$ contain a vertex outside the set of defining vertices of $C$. For the defining hyperedge of the edge $v_iv_{i+1}$, let $x_i$ be this outside vertex. Then we can go from $v_{i}$ to $x_i$ instead of $v_{i+1}$, thus we found a Berge path with the same defining hyperedges and endpoints $v_{i}$, $x_i$. For all such defining vertices $v_{i}$ of the cycle and all such vertices $x_i$, we find such a Berge path of length $r$ that has $x_i$ as an endpoint. Thus, these at least $r+1$ vertices are contained only in the $r$ defining hyperedges of $C$ by Lemma \ref{lemnew1}. 
\end{proof}

For a set $S$ of vertices, let $N_\cH(S)$ denote the set of hyperedges that contain at least one vertex of $S$.


\begin{lemma}\label{lemnew3}
    Let $\cH$ be an $r$-graph with no Berge cycles of length at least $r$ such that the longest Berge path has length $r$. Then at least one of the following holds.

    \textbf{(i)} There is a set $S$ of size $r-1$ with $|N_\cH(S)|\le 1$.

    \textbf{(ii)} There is a set $S$ of size $r+1$ with $|N_\cH(S)|\le r+1$.
\end{lemma}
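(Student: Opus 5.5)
We argue inside a fixed longest Berge path, using Lemma \ref{lemnew1} as the main tool. Since $\cH$ has no Berge cycles of length at least $r$ and its longest Berge path has length $r$, $\cH$ is Berge-$P_{r+1}$-free and Berge-$C_{r+1}$-free. So Lemma \ref{lemnew1} applies to every Berge-$P_r$ in $\cH$. Fix a Berge path $P$ with defining vertices $v_1,\dots,v_{r+1}$ and defining hyperedges $h_1,\dots,h_r$. Let $D=\{v_1,\dots,v_{r+1}\}$ and $U=h_1\cup\dots\cup h_r$. Call a vertex $u$ \emph{terminal} if it is an endpoint of some Berge-$P_r$ whose defining hyperedges are exactly $h_1,\dots,h_r$. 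By Lemma \ref{lemnew1}, every terminal vertex lies only in hyperedges among $h_1,\dots,h_r$. Both $v_1,v_{r+1}$ and every vertex of $(h_1\cup h_r)\setminus D$ are terminal, since we may replace $v_1$ by any $x\in h_1\setminus D$ (respectively $v_{r+1}$ by any $x\in h_r\setminus D$). Throughout, I would also choose $P$ extremally, for instance minimizing $|U|$, or maximizing $|h_1\cap D|$, so that rerouting arguments land in a contradiction with this choice.

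\textbf{Case 1: $U=D$.} Here I aim for (ii) with $S=D$. Every hyperedge meeting $D$ and some vertex outside $D$ must contain a terminal vertex of a suitable rerouted path, which contradicts Lemma \ref{lemnew1}. To see this, I would show that every vertex of $D$ is terminal, via a P\'osa-type rotation inside $D$. If $v_1\in h_j$ for some $j\ge 2$, then $v_{j}, h_{j-1},\dots, v_2, h_1, v_1, h_j, v_{j+1},\dots, v_{r+1}$ is a new Berge-$P_r$ on the same hyperedges with endpoint $v_j$. Since $v_1$ lies in at least one $h_j$ with $j\ge2$ (otherwise we are done differently, see below), repeated rotations should reach every vertex. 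Given this, all hyperedges meeting $D$ are $r$-subsets of the $(r+1)$-set $D$, so $|N_\cH(D)|\le \binom{r+1}{r}=r+1$.

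\textbf{Case 2: $U\ne D$.} Some $h_i$ contains a vertex outside $D$. I would first show that we may take $i=1$ or $i=r$. The idea is to consider the last hyperedge along $P$ that has a vertex outside $D$ and reroute the path through that vertex, producing either a longer path or a cycle of length at least $r$; I am not sure this always works. I would then aim for (i) with $S\subseteq h_1\setminus\{v_2\}$, which has size $r-1$. Every vertex of $h_1\setminus D$ is terminal, so it lies only in $h_1,\dots,h_r$. Suppose such a vertex $x$ lay in some $h_j$ with $j\ge 2$. Then $x,h_1,v_2,\dots,v_j,h_j,x$ is a Berge cycle of length $j$, and we could also splice $x$ into the path to lengthen it or to close a longer cycle. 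So it should be possible to force $x$ to lie in $h_1$ only. The vertices of $h_1\cap D$ other than $v_2$ are handled by the rotation of Case 1: each is terminal, and its only possible extra hyperedges create forbidden cycles of length at least $r$. This gives $N_\cH(S)=\{h_1\}$.

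The main obstacle is the bookkeeping of short Berge cycles, of length less than $r$, which are allowed. A vertex of $h_1$ may legitimately lie in some $h_j$ and thereby create a cycle of length $j<r$, so the naive claim ``$S$ meets only $h_1$'' can fail. My first attempt would be to use the extremal choice of $P$: if such a short cycle exists, rotate to a different Berge-$P_r$ whose first hyperedge has strictly more private vertices, or whose union $U$ is smaller, and iterate. The iteration would terminate either in Case 1, giving (ii), or in a path whose first hyperedge has $r-1$ private vertices, giving (i).
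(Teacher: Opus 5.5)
There is a genuine gap. For $r\ge 3$ your case split never produces (ii) for any hypergraph that satisfies the hypotheses, so in effect you are trying to prove that (i) always holds, and that is false.

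Your Case 1 is empty for $r\ge3$. First, $v_{r+1}\notin h_1$, since otherwise $v_{r+1},h_1,v_2,h_2,\dots,v_r,h_r,v_{r+1}$ is a Berge-$C_r$; symmetrically $v_1\notin h_r$. So $U=D$ forces $h_1=D\setminus\{v_{r+1}\}$ and $h_r=D\setminus\{v_1\}$. Then $h_2$, a third $r$-subset of $D$, contains $v_1$, and $v_1,h_2,v_3,h_3,\dots,v_r,h_r,v_2,h_1,v_1$ is a Berge-$C_r$.

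Now take $r=3$ and the hyperedges $\{v_1,v_2,v_3\}$, $\{v_2,v_3,w\}$, $\{v_2,v_3,v_4\}$.
\begin{itemize}
\item Only $v_2,v_3$ have degree more than one, so there is no Berge cycle of length at least $3$, and $v_1,v_2,v_3,v_4$ carries a Berge-$P_3$. The hypotheses hold.
\item The three degree-one vertices lie in three different hyperedges, so no $2$-set $S$ has $|N_\cH(S)|\le 1$. Only (ii) holds.
\item In every Berge-$P_3$ here, the two end hyperedges lie inside the defining vertex set and the unique non-defining vertex sits in the middle hyperedge. So your reduction to $i\in\{1,r\}$ fails.
\item For the path $v_1,v_2,v_3,v_4$, the vertex $v_3\in h_1\cap D$ is not an endpoint of any Berge-$P_3$ and lies in all three hyperedges. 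So your treatment of $h_1\cap D$ fails.
\end{itemize}
No extremal choice of $P$ or iteration can rescue a conclusion that does not hold.

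The missing idea is a mechanism that outputs (ii) exactly when (i) fails. You must collect $r+1$ vertices whose hyperedges all lie among the at most $r$ defining hyperedges. You get them by rotations that make further vertices endpoints of Berge-$P_r$'s on the same hyperedge set, so that Lemma~\ref{lemnew1} applies to them.

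The basic instance is as follows. Suppose $h_1\cap\{v_2,\dots,v_r\}=\{v_2\}$ and some $w\in h_1\setminus D$ also lies in $h_i$ with $i>1$. Then $v_i,h_{i-1},\dots,v_2,h_1,w,h_i,v_{i+1},\dots,h_r,v_{r+1}$ makes $v_i$ an endpoint. Hence $(h_1\setminus\{v_2\})\cup\{v_i,v_{r+1}\}$ is an $(r+1)$-set met only by $h_1,\dots,h_r$.

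The paper extends this in three steps:
\begin{itemize}
\item It follows the chords $v_i\in h_1$ by growing sets $Q_j$ with $N_\cH(Q_j)\subseteq\{h_1,\dots,h_{r-1}\}$.
\item It runs the same procedure from $v_{r+1}$.
\item It handles the leftover configuration $h_1=\{v_1,\dots,v_r\}$, $h_r=\{v_2,\dots,v_{r+1}\}$, which is precisely the example above. There it shows each middle hyperedge has a private degree-one vertex $w_i$, and takes $\{v_1,w_2,\dots,w_{r-1},v_{r+1}\}$ plus one more vertex.
\end{itemize}
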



This is a variant of Lemma 1 in \cite{GLSZ}. There the assumption is that there is no Berge cycle of length at least $k$ (where $k \leq r$, for them) and it also works for multi-hypergraphs. Here we 
consider the case $k=r+1$ and simple hypergraphs, and also assume the condition on Berge paths. There are three possible consequences in \cite{GLSZ}. With our restrictions, \textbf{(i)} in \cite{GLSZ} is identical to the corresponding statement here, while \textbf{(ii)} in \cite{GLSZ} states that there is a set $S$ of size $r$ with $|N_\cH(S)|\le r-1$. There is a third possibility in \cite{GLSZ} that we do not describe here. We remark that simply applying Lemma 1 from \cite{GLSZ} is almost enough for us. The statements \textbf{(i)} and \textbf{(iii)} there could be applied in our proof. However, finding a set $S$ of size $r$ with $|N_\cH(S)|\le r-1$ is not suitable for our purposes, but we can use a set $S$ of size $r+1$ with $|N_\cH(S)|\le r+1$.

We follow the line of thought of the proof in \cite{GLSZ} below.

\begin{proof}[\bf Proof] 
    Let us choose a Berge path $v_1,e_1,v_2,\dots,e_r,v_{r+1}$ of length $r$. 
    Let $\cF=\{e_1,\dots,e_{r-1}\}$, $U=\{v_2,\dots,v_r\}$ and $U'=U\cup \{v_{r+1}\}$. Observe that $e_1$ does not contain $v_{r+1}$, since then $v_{r+1},e_1,v_2,\dots,e_r,v_{r+1}$ would form a Berge-$C_r$. Analogously, $v_1$ is not in $e_{r}$, hence by Lemma \ref{lemnew1} $v_1$ is contained only by hyperedges in $\cF$.

    \begin{clm}\label{claimnew}
        \textbf{(i)} $N_\cH(e_1\setminus U)\subseteq \cF$.

        \textbf{(ii)} If for some $2\le i\le r$ we have $v_i\in e_1$, then $N_\cH(e_{i-1}\setminus U)\subseteq \cF$.

        \textbf{(iii)} If there are two vertices $v_i,v_j\in e_1$ with $i>j$ such that $(e_{i-1}\cap e_{j-1})\setminus U'\neq \emptyset$, then
$N_\cH(v_{i-1})\subseteq \cF$ and $N_\cH(v_{j})\subseteq \cF$.

\textbf{(iv)} If for some $i$ we have 
$v_1\in e_{i-1}$, then $N_\cH(v_{i-1})\subseteq \cF$. 
    \end{clm}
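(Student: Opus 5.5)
The plan is to prove all four parts the same way. Each time we exhibit a Berge path of length $r$ that uses \emph{exactly} the hyperedges $e_1,\dots,e_r$. Its last step is always $v_r,e_r,v_{r+1}$, and its other endpoint $w$ is the vertex (or an arbitrary vertex of the set) in question.

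Lemma \ref{lemnew1} applies: $\cH$ has no Berge cycle of length at least $r$ and no Berge path longer than $r$, so it is Berge-$P_{r+1}$-free and Berge-$C_{r+1}$-free. It gives $N_\cH(w)\subseteq\{e_1,\dots,e_r\}$.

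To exclude $e_r$, suppose $w\in e_r$. Deleting the last vertex $v_{r+1}$ of the path and closing with $v_r,e_r,w$ gives a Berge cycle. Its $r$ defining vertices are those of the path other than $v_{r+1}$, and its defining hyperedges are $e_1,\dots,e_r$. This cycle has length $r$, a contradiction. Hence $N_\cH(w)\subseteq\cF$.

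The only thing to check each time is that the listed vertices are distinct and $w\neq v_{r+1}$. We already know $v_{r+1}\notin e_1$, so any index $i$ with $v_i\in e_1$ satisfies $i\le r$. When the new endpoint $y$ is taken outside $U$ and could a priori equal $v_{r+1}$, that case is excluded by a Berge-$C_r$ built from the same sequence.

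The paths are as follows.
\begin{itemize}
\item[(i)] For $x\in e_1\setminus U$, take $x,e_1,v_2,e_2,\dots,e_r,v_{r+1}$.
\item[(ii)] For $y\in e_{i-1}\setminus U$, take
\[y,e_{i-1},v_{i-1},e_{i-2},\dots,e_2,v_2,e_1,v_i,e_i,\dots,e_r,v_{r+1}.\]
If $y=v_{r+1}$, the same sequence closed at $v_{r+1}$ is a Berge-$C_r$.
\item[(iii)] Let $z\in(e_{i-1}\cap e_{j-1})\setminus U'$. For the endpoint $v_{i-1}$, take
\[v_{i-1},e_{i-2},\dots,e_j,v_j,e_1,v_2,e_2,\dots,v_{j-1},e_{j-1},z,e_{i-1},v_i,e_i,\dots,e_r,v_{r+1}.\]
When $j=2$ this reads $\dots,v_2,e_1,z,e_{i-1},\dots$. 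For the endpoint $v_j$, take
\[v_j,e_j,\dots,v_{i-1},e_{i-1},z,e_{j-1},v_{j-1},\dots,e_2,v_2,e_1,v_i,e_i,\dots,e_r,v_{r+1}.\]
\item[(iv)] Take $v_{i-1},e_{i-2},\dots,v_2,e_1,v_1,e_{i-1},v_i,e_i,\dots,e_r,v_{r+1}$.
\end{itemize}
In each case one checks that every $e_\ell$ appears exactly once and that the $r+1$ vertices are distinct. Here we use $z\notin U'$, $y\notin U$, and that $v_1$ is distinct from $v_2,\dots,v_{r+1}$.

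The main obstacle is part (iii). One has to find the right rearrangement so that $e_1$ serves as the jump from $v_j$ back to $v_2$, or from $v_2$ forward to $v_i$, and $z$ links $e_{j-1}$ with $e_{i-1}$. One also has to handle the degenerate index cases: $j=2$ (where $e_{j-1}=e_1$) and $j=i-1$ (where the stretch $e_j,\dots,e_{i-2}$ is empty). In these cases I would verify directly that the written sequence still uses each hyperedge exactly once.

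A last point to confirm in every case is that the endpoint $w$ differs from $v_r$, so that the closing step $v_r,e_r,w$ really yields $r$ distinct vertices. This holds since $w$ is either outside $U'$ or equal to $v_{i-1}$ or $v_j$ with index at most $r-1$.
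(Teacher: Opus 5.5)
Your proposal is correct and is essentially the paper's proof. It uses the same rerouted Berge paths of length $r$, built from exactly $e_1,\dots,e_r$ and ending in $v_r,e_r,v_{r+1}$, followed by Lemma \ref{lemnew1} and a Berge-$C_r$ argument to rule out $e_r$. Your uniform cycle-closing step makes the exclusion of $e_r$ explicit in (iii) and (iv), where the paper cites only Lemma \ref{lemnew1}; your first path in (iii) also correctly steps from $v_{j+1}$ to $v_j$ via $e_j$, where the paper's text has $e_{j+1}$.
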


    Note that \textbf{(i)}, \textbf{(ii)} and \textbf{(iii)} are equivalent to statements proved in \cite{GLSZ}, but for the sake of completeness, we prove them.

    \begin{proof}[\bf Proof of Claim]
        Any vertex of $e_1\setminus U$ can replace $v_1$ to be the endvertex of a Berge path of length $r$ with defining hyperedges $\cF\cup\{e_r\}$, thus we can apply the argument above the claim to show that they are contained only in hyperedges of $\cF$. This proves \textbf{(i)}.  
        
        For $w\in e_{i-1}\setminus U$, consider $w,e_{i-1},v_{i-1},\dots, v_2,e_1,v_i,e_{i}, \dots, e_r,v_{r+1}$. If $w=v_{r+1}$, this is a Berge-$C_r$, a contradiction. Otherwise, this is a Berge-$P_r$ with $w$ as an endpoint and $\cF\cup \{e_r\}$ as defining hyperedges. If $e_r$ contains $w$, we clearly obtain a Berge-$C_r$, completing the proof of \textbf{(ii)}.

        Fix $u\in (e_{i-1}\cap e_{j-1})\setminus U'$, and consider the Berge paths $v_{i-1},e_{i-2},v_{i-2}\dots, e_{j+1},v_j,e_1,v_2,e_2, \\ \dots, e_{j-1},u,e_{i-1},v_{i},e_{i},\dots, e_r,v_{r+1}$ and $v_j,e_j,v_{j+1},\dots,v_{i-1},e_{i-1},u,e_{j-1},v_{j-1},\dots, v_2,e_1,v_i,e_i, \\ \dots, e_r,v_{r+1}$. Both $v_{i-1}$ and $v_j$ are endpoints of Berge-$P_r$ with defining hyperedges in $\cF \cup \{e_r\}$, thus Lemma \ref{lemnew1} completes the proof of \textbf{(iii)}.

        Finally, to prove \textbf{(iv)}, the Berge path $v_{i-1},e_{i-2},\dots, v_2,e_1,v_1,e_{i-1},v_i,e_i,\dots, e_r,v_{r+1}$ 
        with Lemma \ref{lemnew1} completes the proof.
    \end{proof}

    Let us return to the proof of the lemma.

    \begin{clm}
        If $e_1\cap U=\{v_2\}$, then either \textbf{(i)} or \textbf{(ii)} of the Lemma holds.
    \end{clm}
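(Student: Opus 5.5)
The natural candidate for (i) is $S=e_1\setminus\{v_2\}$. First I will check that it has the right size. By the hypothesis $e_1\cap U=\{v_2\}$, and by the observation before Claim~\ref{claimnew} we have $v_{r+1}\notin e_1$. Hence $S=e_1\setminus U$ consists of exactly $r-1$ vertices, one of which is $v_1$. Claim~\ref{claimnew}~\textbf{(i)} gives $N_\cH(S)\subseteq\cF$. The case split is then whether $e_1$ is the only hyperedge meeting $S$.

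\emph{Case 1.} No $e_j$ with $2\le j\le r-1$ meets $S$. Then $N_\cH(S)=\{e_1\}$, so $|N_\cH(S)|\le 1$ and (i) holds.

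\emph{Case 2.} Some $e_j$ with $2\le j\le r-1$ contains a vertex $w\in S$. Then $w\in e_1\cap e_j$ and $w\notin U'$. I will reroute the path through $w$ to get
\[
v_j,e_{j-1},v_{j-1},\dots,v_2,e_1,w,e_j,v_{j+1},\dots,e_r,v_{r+1}.
\]
This is a Berge-$P_r$ with the same defining hyperedges $\cF\cup\{e_r\}$ and endpoint $v_j$. By Lemma~\ref{lemnew1}, $N_\cH(v_j)\subseteq\cF\cup\{e_r\}$. I will do this for every such $j$, and more generally for every vertex that becomes an endpoint of some rerouted Berge-$P_r$ on the defining hyperedges $\cF\cup\{e_r\}$. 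Examples are the endpoints $w'$ obtained from other vertices $w'\in S$ lying in several $e_j$, or the swaps in the spirit of Claim~\ref{claimnew}~\textbf{(iii)},\textbf{(iv)}. Each such vertex has all its hyperedges inside a fixed family of $r$ hyperedges. The aim is to collect $r+1$ vertices whose neighbourhoods all lie in $\cF\cup\{e_r\}$ together with at most one further hyperedge. Then (ii) holds, since $|\cF\cup\{e_r\}|=r$.

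The count is as follows. $S$ already gives $r-1$ vertices with $N_\cH(S)\subseteq\cF$. Adding $v_j$ gives $r$ vertices with neighbourhood inside $\cF\cup\{e_r\}$. I then need one more vertex. I would first try $v_{r+1}$, using $N_\cH(v_{r+1})\subseteq\cF\cup\{e_r\}$ from Lemma~\ref{lemnew1}, since $v_{r+1}$ is an endpoint of the original path. That would give $S\cup\{v_j,v_{r+1}\}$ of size $r+1$ with $|N_\cH|\le r$, which is more than enough. If $v_{r+1}=v_j$ is impossible (it is, since $j\le r-1$), the vertices are distinct, and the set works as long as $w$ is counted only once. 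That holds because $w\in S$ already.

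The main obstacle I expect is verifying that no case is missed. The delicate point is whether rerouting through $w$ could create a Berge cycle of length at least $r$, which would contradict the hypotheses. If instead it only creates short cycles, those are allowed and Lemma~\ref{lemnew1} still applies. I would check this using the assumption of no Berge cycles of length at least $r$, exactly as in the proof of Claim~\ref{claimnew}~\textbf{(ii)}.
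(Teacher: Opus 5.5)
Your argument is correct and is essentially the paper's proof. The set $e_1\setminus U$ has $r-1$ vertices and its neighbourhood lies in $\cF$, which gives \textbf{(i)} if no $e_j$ with $j\ge 2$ meets it; otherwise rerouting through $w$ makes $v_j$ an endpoint, and $(e_1\setminus U)\cup\{v_j,v_{r+1}\}$ witnesses \textbf{(ii)}. The extra swaps and your worry about cycles in the last paragraph are unnecessary, since the rerouted sequence is a genuine Berge-$P_r$ and Lemma~\ref{lemnew1} applies to every Berge-$P_r$ in $\cH$.
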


    \begin{proof}[\bf Proof of Claim]
    By Claim \ref{claimnew} \textbf{(i)}, $N_\cH(e_1\setminus U)\subseteq \cF$. If there is no $w\in e_1\setminus U$ that is contained in some $e_i$ with $i>1$, then \textbf{(i)} of the Lemma holds. Otherwise, the Berge-path $v_{i},e_{i-1},\dots, v_2,e_1,w,e_i, v_{i+1},e_{i+1},\dots,e_r,v_{r+1}$ shows that $N_\cH(v_i)\subseteq \cF\cup \{e_r\}$. Finally, $N_\cH(v_{r+1})\subseteq \cF\cup \{e_r\}$ by Lemma \ref{lemnew1}, thus $N_\cH\big((e_1\setminus U)\cup \{v_i,v_{r+1}\}\big)\subseteq \cF\cup \{e_r\}$, hence \textbf{(ii)} of the Lemma holds.
    \end{proof}

    Let us return to the proof of the Lemma. Let $e_1\cap U=\{v_{i_0},v_{i_1},\dots,v_{i_s}\}$, then $v_{i_0}=v_2$. We define recursively the sets $Q_j$ the following way. Let $Q_1=e_1\setminus U$ and for $1\le j\le s$, if $(e_{i_j-1}\setminus U')\cap Q_j= \emptyset$, and $e_{i_j-1}\neq U'$ then let $Q_{j+1}=Q_j\cup (e_{i_j-1}\setminus U')$. 
    Otherwise let $Q_{j+1}=Q_j\cup (e_{i_j-1}\setminus U')\cup \{v_{i_j-1}\}$. 
    
    We claim that $|Q_{j+1}|>|Q_j|$. Indeed, when we add $v_{i_j-1}$, it is a new vertex, since we only added vertices outside $U$ and vertices $v_{i_{\ell-1}}$ for some $\ell<j$. Otherwise, we add a set $e_{i_j-1}\setminus U'$ that contains an element not in $Q_j$, unless $e_{i_j-1}\setminus U'=\emptyset$. 
    
    Therefore, $|Q_{s+1}|\ge |Q_1|+s\ge r-1$.  
    Observe that by Claim \ref{claimnew}, $N_\cH(Q_{s+1})\subseteq \cF$. We have $N_\cH(Q_{s+1}\cup \{v_{r+1}\})\subseteq \cF\cup \{e_r\}$, thus we are done unless $|Q_{s+1}|=r-1$.  Now, we repeat the above procedure for the same path, but starting from $v_{r+1}$, and going to the other direction. This way we obtain $R_{t+1}$ with $|R_{t+1}|\ge r-1$ and $N_\cH(R_{t+1}\cup \{v_{1}\})\subseteq \cF\cup \{e_r\}$.
    We are done if $|Q_{s+1}\cup R_{t+1}|\ge r+1$. Otherwise, since $v_{r+1}\not\in Q_{s+1}$ and $v_1\not\in R_{t+1}$, we have that $Q_{s+1}\setminus \{v_1\}=R_{t+1}\setminus \{v_{r+1}\}$. In particular, since $e_1\setminus U\subset Q_{s+1}$ but it cannot be in $R_{t+1}$ (otherwise a Berge-$C_r$ would be found), we have that $e_1\setminus U=\{v_1\}$, and thus $e_1=\{v_1,v_2,\dots, v_r\}$ and $e_r=\{v_2,v_3,\dots, v_{r+1}\}$. 



    If $v_1$ is contained in $e_{i}$ for some $1<i<r$, then the Berge path $v_{i},e_{i-1},\dots, v_2,e_1,v_1, e_{i},v_{i+1}, \\ \dots, e_r,v_{r+1}$ also has that the first edge consists of the first $r$ vertices, thus $e_i=e_1$, a contradiction. 

    

    Therefore, each endpoint of a Berge-$P_r$ has degree 1.
Then each $e_i$ contains a non-defining vertex $w_i$, thus $w_i,e_i,v_i,e_{i-1}, \dots, v_2,e_1,v_{i+1},e_{i+1},\dots,v_r,e_r,v_{r+1}$ shows that $w_i$ is contained only in $e_i$. Thus, the $r$ vertices $v_1,w_2, \dots, w_{r-1},v_{r+1}$ each have degree $1$. Consider now some $v_i$ ($i=2,\dots,r$). If there is no non-defining hyperedge that contains $v_i$, then \textbf{(ii)} holds for $\{v_1,w_2, \dots, w_{r-1},v_{r+1},v_i\}$. 

Assume that there exists a non-defining hyperedge $h$ containing $v_i$. If $h$ contains only defining vertices, then $h\subseteq \{v_1,\dots,v_{r+1}\}$ and $h\neq e_1,e_r$. This implies that $v_1,v_{r+1}\in h$, and thus $h$ forms a Berge-$C_{r+1}$ with  $v_1,e_1,v_2,\dots,e_r,v_{r+1}$, a contradiction. Therefore, 
$h$ contains a non-defining vertex $v\neq w_i$ as $d(w_i)=1$. The Berge path $v,h,v_i,e_{i-1}, \dots, v_2,e_1,v_{i+1}$, $e_{i+1},\dots,v_r,e_r,v_{r+1}$ implies that $v$ is contained only in $h$.
Thus, \textbf{(ii)} holds for $\{v_1,w_2, \dots, w_{r-1},$ $v_{r+1},v\}$, and we are done. 
\end{proof}

Let $\cH$ be an $r$-graph and $U\subseteq V(\cH)$ be a nonempty subset. Let $\cH-U$ denote the $r$-graph obtained from $\cH$ by removing the vertices in $U$ and the hyperedges incident to them. 

Now we are ready to prove our main theorem for $k=r+1$. Recall that it states that if $n=p(r+1)+q$ with $q<r+1$, then $\ex_r(n,\text{Berge-}P_{r+1})=p(r+1)+\binom{q}{r}$.

\begin{proof}[\bf Proof of the case $k=r+1$ of Theorem \ref{main}]
    Let $\cH$ be a counterexample with the smallest number of vertices. Since $\cH$ is Berge-$P_{r+1}$-free, it contains no Berge cycles of length at least $r+2$. 
    Suppose $\cH$ contains a Berge-$C_{r+1}$, say $v_1,e_1,\dots, v_{r+1},e_{r+1}$. Assume that a defining hyperedge, say $e_1$ contains a non-defining vertex $u$. Then $u,e_1,\dots, v_{r+1},e_{r+1},v_1$ is a Berge-$P_{r+1}$, a contradiction. Therefore, these hyperedges form a clique, and any further hyperedge intersecting this clique would also create a Berge-$P_{r+1}$. Deleting this clique we obtain another counterexample on $n-r-1$ vertices, a contradiction. 

    Now we can apply Lemma \ref{lemnew2}. If there are $r+1$ vertices incident to at most $r$ hyperedges, then deleting those vertices we get a smaller counterexample. Otherwise, by Lemma \ref{lemnew2} there is no Berge-$C_r$ in $\cH$. Note that $$e(\cH)>p(r+1)+\binom{q}{r}>p(r-1)+1=\left\lfloor \frac{n}{r+1}\right\rfloor(r-1)+1\geq {\rm{ex}}_r(n,{\rm Berge}{\text -}P_r).$$
    This implies that the longest Berge path in $\cH$ has length $r$, and thus we can apply Lemma \ref{lemnew3}. If there is a set $S$ of size $r-1$ with $|N_\cH(S)|\le 1$ and $0\neq q\neq r-1$, we delete $S$ and find a smaller counterexample. If $q=0$, the statement already follows from the result by Davoodi, Gy\H{o}ri, Methuku and Tompkins (Theorem 3 in \cite{10-10A1}). 


    Now we consider the case $q=r-1$. If $|N_\cH(S)|= 0$, then we delete $S$ and find a smaller counterexample. If $|N_\cH(S)|= 1$, then we consider the subhypergraph $\cH-S$. Note that $|V(\cH-S)|=p(r+1)$ and $e(\cH-S)\geq p(r+1)$. Recall that $\cH$ contains no Berge cycles of length at least $r$. Then 
    $\cH-S$ contains no Berge cycles of length at least $r$. Thus, by Theorem 10 in \cite{GLSZ}, we have \begin{eqnarray*} 
    e(\cH-S)&\leq& \max\left\{\left\lfloor\frac{|V(\cH-S)|-1}{r}\right\rfloor(r-1),|V(\cH-S)|-r+1\right\} \\
    &=& \max\left\{\left\lfloor\frac{p(r+1)-1}{r}\right\rfloor(r-1),p(r+1)-r+1\right\} \\
    &<& p(r+1), 
    \end{eqnarray*}
    which contradicts the fact that $e(\cH-S)\geq p(r+1)$. 
    Finally, if there is a set $S$ of size $r+1$ with $|N_\cH(S)|\le r+1$, we delete $S$ and find a smaller counterexample. This completes the proof.
\end{proof}

\section{Proof of Theorem \ref{main} for $k>r+1$}\label{bigk}

We have mentioned the bound $\ex(n,K_r,F)\le \ex_r(n,\textup{Berge-}F)$. There is also a bound $\ex_r(n,\textup{Berge-}F)\le \ex(n,K_r,F)+\ex(n,F)$ \cite{GP2}. This was strengthened by F\"{u}redi, Kostochka and Luo \cite{FKL} and by Gerbner, Methuku and Palmer \cite{B4} independently in the following way.

A \textit{red-blue graph} is a graph with every edge colored red or blue. We say that a red-blue graph is \textit{$F$-free} if it is $F$-free without considering the colors.
Given a red-blue graph $G$, we denote by $G_r$ the subgraph consisting of the red edges and by $G_b$ the subgraph consisting of the blue edges. Let $g(G):=\cN(K_r,G_b)+e(G_r)$.

\begin{lemma}[Gerbner, Methuku, and Palmer \cite{B4}]\label{lem2.1}
    Given a Berge-$F$-free $n$-vertex $r$-graph $\cH$, there is an $F$-free red-blue $n$-vertex graph $G$ such that $\cH$ has at most $g(G)$ hyperedges.
\end{lemma}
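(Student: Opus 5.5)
The plan is to build $G$ from a maximum matching in an auxiliary bipartite graph and then use alternating paths to decide the colours. Let $B$ be the bipartite graph with one side $E(\cH)$ and the other side $\binom{V(\cH)}{2}$, the set of all pairs of vertices. A hyperedge $h$ is adjacent to a pair $uv$ whenever $\{u,v\}\subseteq h$. Fix a maximum matching $M$ in $B$. The edge set of $G$ will be exactly the set of pairs covered by $M$, split into a red part and a blue part.

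First, $G$ is $F$-free. Every edge $uv$ of $G$ is matched by $M$ to a hyperedge $h_{uv}\supseteq\{u,v\}$, and distinct pairs receive distinct hyperedges. So a copy of $F$ in $G$ yields the injection $e\mapsto h_e$ from $E(F)$ into $E(\cH)$ with $e\subseteq h_e$. This is a Berge-$F$ in $\cH$, a contradiction. Hence $G$ is $F$-free regardless of the colouring.

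Next, the colouring. Let $\mathcal{U}$ be the set of hyperedges left unmatched by $M$. Let $\mathcal{R}\subseteq E(\cH)$ and $X\subseteq\binom{V(\cH)}{2}$ be the hyperedges and pairs reachable from $\mathcal{U}$ by $M$-alternating paths: these start at a vertex of $\mathcal{U}$, leave hyperedges along non-matching edges of $B$, and leave pairs along $M$-edges. We record three facts.
\begin{itemize}
\item Maximality of $M$ means no augmenting path exists, so every pair in $X$ is matched. Its partner is then in $\mathcal{R}$, so $X\subseteq E(G)$.
\item If $h\in\mathcal{R}$, then every pair contained in $h$ lies in $X$. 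Indeed, from $h$ we may step to any pair inside $h$ along a non-matching edge, unless that pair is $M$-matched to $h$ itself. In that case $h\notin\mathcal{U}$, $h$ was reached through that very pair, and so the pair is in $X$ anyway.
\item Every hyperedge outside $\mathcal{R}$ is matched, since $\mathcal{U}\subseteq\mathcal{R}$. Its partner pair is not in $X$, because the partner of any pair in $X$ lies in $\mathcal{R}$.
\end{itemize}

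Colour the pairs of $X$ blue and the remaining matched pairs red. By the second fact, each $h\in\mathcal{R}$ spans a blue $K_r$ on its own $r$ vertices. Since $\cH$ has no multiple hyperedges, distinct members of $\mathcal{R}$ give distinct copies of $K_r$, so $|\mathcal{R}|\le\cN(K_r,G_b)$. By the third fact, $M$ injects $E(\cH)\setminus\mathcal{R}$ into the red edges, so $|E(\cH)\setminus\mathcal{R}|\le e(G_r)$. Adding the two bounds gives $e(\cH)\le g(G)$.

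The main obstacle is the bookkeeping in the second and third facts. One must check that the partition into reachable and unreachable hyperedges is consistent: reachable hyperedges have all their pairs blue, while unreachable hyperedges are matched to red pairs. This needs care in the definition of alternating reachability and in the use of maximality, which rules out unmatched pairs in $X$. Simplicity of $\cH$ is used only to make the blue cliques distinct.
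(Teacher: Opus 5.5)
Your proof is correct and complete. The paper has no proof of this lemma to compare against. It quotes the lemma from Gerbner, Methuku and Palmer \cite{B4} (obtained independently by F\"uredi, Kostochka and Luo \cite{FKL}) and uses it only as a black box in Section~\ref{bigk}. Your argument is therefore a self-contained replacement for the citation.

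Structurally, you are proving K\"onig's theorem inline for the hyperedge--pair incidence graph $B$. Your three facts say that $C=(E(\cH)\setminus\mathcal{R})\cup X$ is a vertex cover of $B$, that every vertex of $C$ is matched, and that every $M$-edge has at most one end in $C$. Hence $C$ is a minimum cover.

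If you are willing to quote K\"onig's theorem, the lemma follows in a few lines from any minimum cover $C$. Each $M$-edge then has exactly one end in $C$. Colour the pairs of $C$ blue and the $M$-partners of the hyperedges of $C$ red. Every hyperedge outside $C$ has all its pairs in $C$, so it spans a blue clique.

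Your alternating-path version is self-contained and constructive. It also exhibits two features the statement does not record. First, every edge of $G$ is matched to its own hyperedge, and $G$ is built without reference to $F$, so the same $G$ works for every $F$ for which $\cH$ is Berge-$F$-free. Second, the blue graph is exactly the $2$-shadow of $\mathcal{R}$.

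The only detail you leave implicit is that appending one edge of $B$ to an alternating path from $\mathcal{U}$ may revisit a vertex. This is harmless. Every prefix of such a path is again such a path, so a revisited vertex is already reachable, and your second and third facts go through unchanged.
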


This lemma was used in \cite{B4} to obtain several bounds on Berge-Tur\'an problems, in particular it can be used to prove Theorem \ref{gykl}. However, it cannot prove Theorem \ref{main}, since an $n$-vertex $P_k$-free graph may have $g(G)$ larger than the claimed bound. Indeed, if $q<r$, then a $q$-clique does not contain any $K_r$, but may contain red edges. Furthermore, if $q\le r+1$, then $K_q$ contains more edges than $r$-cliques, thus it makes more sense to color it red. 

We determine the largest value of $g(G)$ among $n$-vertex $P_k$-free red-blue graphs. In particular, this proves Theorem \ref{main} in the case $q\ge r+2$.

We will use the following simple inequalities.

\begin{lemma}\label{lemi}
    \textbf{(i)} If $k\ge r+2$, then $(k-1)/2\le \binom{k-1}{r-1}/r$.

    \textbf{(ii)} If $3\leq r\le \ell\le k-2$, then $(k-\ell-1)/2\le \frac{\binom{k-1}{r-1}-\binom{\ell}{r-1}}{r}$.
\end{lemma}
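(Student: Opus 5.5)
Both inequalities are elementary. For \textbf{(i)} I will use the identity $\binom{k-1}{r-1}/r=\binom{k-1}{r}/(k-r)$ together with $r\ge 2$, so nothing more than monotonicity of binomial coefficients is needed. Since $r\ge 2$ and $k-1\ge r+1$, the middle binomial coefficients dominate the ends, and $\binom{k-1}{r-1}\ge \binom{k-1}{1}=k-1$ whenever $1\le r-1\le k-2$. This gives only $\binom{k-1}{r-1}/r\ge (k-1)/r$, which is too weak for $r\ge 3$. So instead I will argue by induction on $k$ with $r$ fixed. At the base $k=r+2$ we have $\binom{r+1}{r-1}/r=(r+1)/2=(k-1)/2$, so equality holds. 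For the step from $k$ to $k+1$, the left side increases by $1/2$ and the right side increases by $\binom{k-1}{r-2}/r$. It therefore suffices to check $\binom{k-1}{r-2}\ge r/2$. For $r=2$ this reads $1\ge 1$. For $r\ge 3$ we have $k-1\ge r+1$, so $\binom{k-1}{r-2}\ge k-1\ge r+1>r/2$.

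For \textbf{(ii)} I will again fix $r$ and $\ell$ and induct on $k$, starting at $k=\ell+2$. At the base, the left side is $1/2$ and the right side is $\bigl(\binom{\ell+1}{r-1}-\binom{\ell}{r-1}\bigr)/r=\binom{\ell}{r-2}/r$. Since $\ell\ge r\ge 3$, we get $\binom{\ell}{r-2}\ge \binom{\ell}{1}=\ell\ge r$ when $r-2\ge 1$, because $1\le r-2\le \ell-1$. Hence the right side is at least $1>1/2$. For the induction step, increasing $k$ by one adds $1/2$ to the left and $\binom{k-1}{r-2}/r\ge (k-1)/r\ge 1$ to the right, since $k-1\ge \ell+1>r$. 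So the inequality is preserved.

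Alternatively, (ii) can be done by telescoping in one line. Write $\binom{k-1}{r-1}-\binom{\ell}{r-1}=\sum_{m=\ell}^{k-2}\binom{m}{r-2}$. Each of the $k-\ell-1$ terms satisfies $\binom{m}{r-2}\ge m\ge r$, because $r\ge 3$ and $m\ge \ell\ge r$. This gives a right side of at least $k-\ell-1\ge (k-\ell-1)/2$.

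The only delicate point is the small-parameter edge case $r=2$ in (i), where the step inequality is tight. Part (ii) excludes $r=2$ precisely because the bound $\binom{m}{r-2}\ge r/2$ still holds there but I would rather not rely on it. No step is a real obstacle; the main thing is to use the hypotheses $k\ge r+2$ and $\ell\ge r\ge 3$ to guarantee that the relevant binomial coefficients are in the range where they exceed $r/2$.
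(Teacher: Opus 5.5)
Your proof is correct and is essentially the paper's. For (i) the paper likewise checks equality at $k=r+2$ and compares the increments $1/2$ and $\binom{k-1}{r-2}/r$. For (ii) it uses the same Pascal-identity increment comparison, only inducting by decreasing $\ell$ with $k$ fixed (base case $\ell=k-2$) rather than increasing $k$ with $\ell$ fixed; your telescoping sum is the unrolled form of either induction. The opening identity $\binom{k-1}{r-1}/r=\binom{k-1}{r}/(k-r)$ and the closing remark about $r=2$ play no role in the argument and could be dropped.
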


\begin{proof}[\bf Proof]
    Observe that \textbf{(i)} holds with equality if $k=r+2$ and increasing $k$ increases the right-hand side by more than the left-hand side.

    To prove \textbf{(ii)}, we use induction on $k-\ell-1$. In the case $k-\ell-1=1$, we have that $$\frac{1}{2}\le \frac{r(r-1)}{2r}\le \frac{(k-2)(r-1)}{2r}\le \frac{(k-2)(k-3)\cdot(k-4)\cdots(k-r+1)\cdot2}{2r\cdot(r-2)\cdots3\cdot2}=\frac{\binom{k-2}{r-2}}{r}=\frac{ \binom{k-1}{r-1}-\binom{k-2}{r-1}}{r}.$$
    If $\ell$ decreases by 1, then the left-hand side increases by $\frac{1}{2}$, and the right-hand side increases by $\frac{\binom{\ell}{r-1}-\binom{\ell-1}{r-1}}{r}=\frac{\binom{\ell-1}{r-2}}{r}\geq \frac{r-1}{r}\geq \frac{1}{2}$ since $\ell\ge r$. 
\end{proof}

We will follow the strategy of the proof of Theorem \ref{cc} by Chakraborti and Chen \cite{chch}. In particular, we will use 
Karamata's inequality \cite{kara}.

\begin{lemma}[Karamata's inequality \cite{kara}] Let $f$ be a real valued convex function defined on
$\mathbb{N}$. If $x_1, x_2,\dots , x_N$ and $y_1, y_2,\dots, y_N$ are integers such that
$x_1 \ge x_2\ge \dots \ge x_N$ and $y_1 \ge y_2\ge \dots \ge y_N$,
$x_1 + x_2 + \dots + x_i \ge y_1 + y_2 + \dots + y_i$ for all $i<N$, and $x_1 + x_2 + \dots +x_N = y_1 + y_2 + \dots +y_N$, 
then
$f(x_1) + f(x_2) + \dots +f(x_N) \ge f(y_1) + f(y_2) + \dots +f(y_N)$.    
\end{lemma}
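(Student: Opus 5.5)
Karamata's inequality is a classical result, so the natural proof is the standard majorization argument via Abel summation, using only the convexity of $f$ on the integers. Throughout I write $X_i=x_1+\dots+x_i$ and $Y_i=y_1+\dots+y_i$, with $X_0=Y_0=0$. The hypotheses give $X_i\ge Y_i$ for $i<N$ and $X_N=Y_N$.

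The first step is to linearize $f$ using slopes. For each $i$ with $x_i\ne y_i$, set
\[
c_i=\frac{f(x_i)-f(y_i)}{x_i-y_i},
\]
and when $x_i=y_i$ let $c_i$ be any slope of $f$ at $x_i$. Since $f$ is defined only on $\mathbb{N}$, I take this to be a one-sided difference, for instance $f(x_i+1)-f(x_i)$ or the left difference, chosen consistently. Then $f(x_i)-f(y_i)=c_i(x_i-y_i)$ for every $i$.

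The key claim is that the sequence $c_i$ is nonincreasing in $i$. The reason is that for a convex function on the integers the chord slope $\frac{f(a)-f(b)}{a-b}$ is nondecreasing in each of its arguments. Both sequences $x_i$ and $y_i$ are nonincreasing, so as $i$ grows both endpoints of the chord move left and the slope can only decrease. Verifying this, including the degenerate cases $x_i=y_i$, is the one step I expect to need care. I would reduce it to the discrete three-chord lemma, which says that $f(m+1)-f(m)$ is nondecreasing in $m$ and that every chord slope is an average of consecutive differences. The degenerate case is handled by choosing the one-sided difference so that it lies between the neighbouring chord slopes. For instance, take the right difference $f(x_i+1)-f(x_i)$, and note that it is sandwiched appropriately because any chord with both endpoints at least $x_i$ has slope at least $f(x_i+1)-f(x_i)$ when one endpoint exceeds $x_i$. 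If that choice runs into trouble, I would instead perturb or simply drop the indices with $x_i=y_i$. Those indices contribute $0$ to $\sum_i c_i(x_i-y_i)$ whatever $c_i$ is, but removing them changes the partial sums. So the cleaner route is to choose $c_i$ for such indices as any value between $c_{i+1}$ and $c_{i-1}$; such a value exists once monotonicity is known for the nondegenerate indices.

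The second step is Abel summation:
\[
\sum_{i=1}^N \bigl(f(x_i)-f(y_i)\bigr)=\sum_{i=1}^N c_i\bigl((X_i-Y_i)-(X_{i-1}-Y_{i-1})\bigr)=\sum_{i=1}^{N-1}(c_i-c_{i+1})(X_i-Y_i)+c_N(X_N-Y_N).
\]
The last term vanishes because $X_N=Y_N$. Each remaining term is a product of $c_i-c_{i+1}\ge 0$ and $X_i-Y_i\ge 0$, so the whole sum is nonnegative, and this is the desired inequality.
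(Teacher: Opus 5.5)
The paper does not prove this lemma. It quotes it from \cite{kara} as a black box and applies it with $f(x)=\binom{x}{r-1}/r$ in the proof of Theorem~\ref{thm2.4}. Your Abel-summation argument is the standard proof, and it is correct.

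Compared with the citation, your proof has one advantage: it shows that only discrete convexity is used, meaning nondecreasing first differences $\Delta f(m)=f(m+1)-f(m)$. That is exactly the property of $\binom{x}{r-1}/r$ the paper relies on. Your Abel summation is right as written. The monotonicity of the nondegenerate slopes is cleanest along the lines you suggest. Put $p_i=\min(x_i,y_i)$ and $q_i=\max(x_i,y_i)$; both are nonincreasing in $i$. Then $c_i$ is the average of $\Delta f$ over the window $\{p_i,\dots,q_i-1\}$. Moving both ends of this window weakly to the left cannot increase the average of a nondecreasing sequence, so $c_i$ is nonincreasing.

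You hedge more than necessary on the degenerate indices.
\begin{itemize}
\item Your first choice, $c_i=\Delta f(m)$ when $x_i=y_i=m$, already works on its own. A nondegenerate chord with both endpoints at least $m$ has slope at least $\Delta f(m)$. One with both endpoints at most $m$ has slope at most $\Delta f(m-1)\le \Delta f(m)$. Consecutive degenerate indices compare via the monotonicity of $\Delta f$.
\item Simply dropping the degenerate indices is also fine, contrary to your worry. Deleting an index with $x_i=y_i$ from both sequences keeps them sorted. It leaves the list of differences $X_j-Y_j$ unchanged apart from re-indexing and the loss of the duplicate $X_i-Y_i=X_{i-1}-Y_{i-1}$. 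Those differences are all that the hypotheses and the Abel summation use.
\item If you keep the interpolation fix instead, phrase it as choosing $c_i$ between the slopes of the nearest nondegenerate indices on either side. The immediate neighbours $i-1$ and $i+1$ may themselves be degenerate.
\end{itemize}
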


\begin{thm}\label{thm2.4}
    Let $n=pk+q$ with $q<k$. Then for any $P_k$-free red-blue $n$-vertex graph $G$ we have 
    \begin{displaymath}
    g(G)\le p\binom{k}{r}+
    \left\{ \begin{array}{l l}
    \binom{q}{r} & \textrm{if\/ $q\ge r+2$},\\
    \binom{q}{2} & \textrm{if\/ $q\le r+2$}.\\
    \end{array}
    \right.
    \end{displaymath}
    Furthermore, this upper bound can be achieved. 
\end{thm}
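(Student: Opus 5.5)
The plan is to follow the approach of Chakraborti and Chen for Theorem \ref{cc}: reduce to connected components, bound each component through a weight distributed over its vertices, and conclude with Karamata's inequality. Lower bound first: take $p$ disjoint blue copies of $K_k$, which give $p\binom{k}{r}$. Add one $K_q$, colored blue if $q\ge r+2$ (giving $\binom{q}{r}$) and red otherwise (giving $\binom{q}{2}$). At $q=r+2$ both colorings give $\binom{r+2}{2}=\binom{r+2}{r}$, which is why the two cases overlap. The resulting graph is $P_k$-free, so the bound is achieved.

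For the upper bound we may assume $G$ is edge-maximal among $P_k$-free graphs with maximal $g$. Both $g$ and the target function are additive over components, so it suffices to bound each component $C$ with $m$ vertices. Writing $m=ak+b$, the goal is $g(C)\le a\binom{k}{r}+h(b)$, where $h(b)=\binom{b}{r}$ or $\binom{b}{2}$ according to the case. Then a superadditivity check, combining the leftover parts of several components and using that $h$ is increasing and convex-like, gives the global bound.

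\textbf{Component bound.} I would attach weights to vertices. Each blue $K_r$ is split evenly among its $r$ vertices and each red edge evenly between its $2$ endpoints. I would then show that a connected $P_k$-free graph admits an ordering (or decomposition) in which each vertex $v$ gets weight at most $\max\{\binom{d_v}{r-1}/r,\ d_v/2\}$ for a suitable "degree" $d_v\le k-1$. I would mimic the Kopylov / Chakraborti--Chen structure: either $C$ is small ($m\le k$, where the bound is $\max(\binom{m}{r},\binom m2)$ directly, since all edges are red or lie in blue cliques), or $C$ has a long path and we get the Kopylov-type structure where there is a set of low-degree vertices. In the latter case, the degree sequence is majorized by that of $aK_k\cup K_b$. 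Lemma \ref{lemi} is exactly what is needed here: part (i) shows that red edges at a vertex of degree up to $k-1$ never beat blue cliques in the full-clique configuration, so $\binom{k-1}{r-1}/r$ is the right per-vertex maximum. Part (ii) controls the trade-off when a vertex has degree $\ell$ instead of $k-1$: the loss in blue clique weight dominates any gain from red edges, $(k-\ell-1)/2$.

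\textbf{Karamata step and main obstacle.} With per-vertex weights bounded by a convex function $f$ of a sequence $x_v$ that satisfies the prefix-sum majorization against $(k-1,\dots,k-1,q-1,\dots,q-1)$, Karamata's inequality gives $\sum f(x_v)\le pk f(k-1)+q f(q-1)$. Here $f(x)=\max\{\binom{x}{r-1}/r,\ x/2\}$ is convex, and the right side is the claimed bound. The main obstacle is establishing this majorization for $P_k$-free graphs together with the red/blue mixing. I would adapt Chakraborti--Chen's lemma that a $P_k$-free connected graph has a vertex ordering in which each vertex has few neighbours among later vertices. Cliques counted at their first vertex, and red edges likewise, yield the sequence $x_v$, with Lemma \ref{lemi}(ii) used to handle the non-clique components, where degrees $\ell<k-1$ appear.
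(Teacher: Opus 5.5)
Your lower bound and your endgame are fine. The function $f(x)=\max\{\binom{x}{r-1}/r,\,x/2\}$ is convex and nondecreasing, $f(k-1)=\binom{k-1}{r-1}/r$ by Lemma~\ref{lemi}(i), and $pkf(k-1)+qf(q-1)=p\binom{k}{r}+\max\{\binom{q}{r},\binom{q}{2}\}$ is exactly the stated bound.

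\textbf{The gap.} The step you call the main obstacle carries all the content, and the route you sketch for it fails.

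With weights split evenly, the weight of a vertex cannot be bounded by $f(d_v)$ for a single $d_v\le k-1$. In an all-red star $K_{1,m-1}$, which is $P_k$-free, the centre carries weight $(m-1)/2$. For $k\ge 2r-1$, take an all-blue $K_s\vee\overline{K}_{m-s}$ with $s=\lfloor (k-1)/2\rfloor$; each vertex of the $K_s$ lies in $\Theta(m)$ blue copies of $K_r$. For the same reason, the degree sequence of a connected $P_k$-free graph is not majorized by that of $aK_k\cup K_b$.

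Passing to a vertex ordering and charging each clique or red edge to its first vertex does not rescue this. A vertex then carries $\binom{d^+_b(v)}{r-1}+d^+_r(v)$, with no factors $1/r$ or $1/2$. Its only bound in terms of $d^+(v)$ is $\max\{\binom{d^+(v)}{r-1},d^+(v)\}$. Already on one blue $K_k$, with out-degrees $k-1,\dots,0$, these bounds sum to more than $\binom{k}{r}$, so no sharp bound can come out. This ordering is also not the mechanism in Chakraborti--Chen's proof.

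\textbf{What is missing.} You need the degree-chunking argument the paper takes from Chakraborti--Chen.
\begin{itemize}
\item Write $d_b(v)=a(v)(k-1)+b(v)$.
\item Bound the number of blue $K_r$'s through $v$ by $a(v)\binom{k-1}{r-1}+\binom{b(v)}{r-1}$. This comes from applying Theorem~\ref{cc} to the blue neighbourhood of $v$, which is $P_{k-1}$-free.
\item Use their fact that the multiset of such chunks, over all vertices of a $P_k$-free $n$-vertex graph, is dominated in prefix sums by $x$: first $pk$ entries equal to $k-1$, then $q$ entries equal to $q-1$.
\end{itemize}

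\textbf{How your $f$ would then finish.} Once you have these, your $f$ gives a shorter finish than the paper's.
\begin{itemize}
\item Chunk $d_r(v)$ as well.
\item Karamata applied to the pieces at $v$ gives $c(v)=k(v)/r+d_r(v)/2\le\sum f(\text{pieces})\le\sum f(\text{chunks of } d(v))$.
\item Apply the prefix-sum comparison to the chunks of the degrees of the underlying $P_k$-free graph. Unequal totals are harmless, since $f$ is nondecreasing and $2e(G)\le pk(k-1)+q(q-1)$.
\end{itemize}

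The paper instead compares only the blue chunks. It then handles red edges separately through $e(G)\le\ex(n,P_k)$, the merging procedure, and Lemma~\ref{lemi}(ii). Your reduction to components and the Kopylov-type structure are not needed in either route.
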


\begin{proof}[\bf Proof]
    The lower bounds are given by the graph consisting of $p$ mono-blue copies of $K_k$ and a copy of $K_q$ that is mono-blue if $q\ge r+2$ and mono-red otherwise. 

    To prove the upper bound, we follow the strategy of Chakraborti and Chen \cite{chch}. For a vertex $v$, let $d_b(v)$ denote the number of blue edges incident to $v$ and $d_r(v)$ denote the number of red edges incident to $v$. Let $k(v)$ denote the number of blue copies of $K_r$ containing $v$.
    The \textit{contribution} of $v$ is $c(v):=\frac{k(v)}{r}+\frac{d_r(v)}{2}$. Clearly, $\sum_{v\in V(G)} c(v)=g(G)$. 

    For any vertex $v$ with $d_b(v)=a(v)(k-1)+b(v)$ we have that $k(v)\le a(v)\binom{k-1}{r-1}+\binom{b(v)}{r-1}$ using Theorem \ref{cc}, since the neighborhood of $v$ is $P_{k-1}$-free. Therefore, 
    \begin{equation}\label{eq1}
    g(G)\le\sum_{v\in V(G)} \bigg[a(v)\frac{\binom{k-1}{r-1}}{r}+\frac{\binom{b(v)}{r-1}}{r}+\frac{d_r(v)}{2}\bigg].\end{equation}

The proof of Theorem \ref{cc} in \cite{chch} goes the following way. If $d_r(v)=0$, then the right-hand side of the above equation corresponds to a sequence $\{y_i\}_{i=1}^N$ consisting of $\sum_{v\in V(G)} a(v)$ copies of $k-1$ and $b(v)$ for each $v$.
They construct a sequence $x_i$ such that $x_i=k-1$ for $i\le pk$, $x_i=q-1$ for all $pk<i\le pk+q$, and $x_i=0$ for $i>pk+q$. Then they show that the conditions of Karamata's inequality hold for $x_i$ and $y_i$, which completes their proof (using that $\binom{x}{r-1}/r$ is a convex function).





Let $y:=\sum_{v\in V(G)} d_r(v)$.
 We will now increase the right-hand side of (\ref{eq1})
 the following way. We will obtain a quantity $\sum_{i=1}^m \frac{\binom{z_i}{r-1}}{r}+z/2$ for some $m$ such that the following three conditions hold for the sequence $z,z_1,z_2,\dots, z_m$.
 
 1, $k-1\ge z_1\ge z_2\ge\dots\ge z_m$, 
 
 2, $\sum_{i=1}^{pk+1} \binom{z_i}{r-1}\le pk\binom{k-1}{r-1}+\binom{q-1}{r-1}$ and 
 
 3, $z+\sum_{i=1}^m z_i=2e(G)$.

 First we show that these conditions hold originally, i.e., for the sequence $y,y_1,\dots, y_N$.
 Observe that $\sum_{v\in V(G)} [a(v)(k-1)+b(v)+d_r(v)]=\sum_{v\in V(G)}d(v)=2e(G)$, thus the third condition holds for $y_i$. The first condition holds for $y_i$ by definition, while the second condition holds because $\sum_{i=1}^{pk+1} \binom{y_i}{r-1}\le\sum_{i=1}^{pk+1} \binom{x_i}{r-1}= pk\binom{k-1}{r-1}+\binom{q-1}{r-1}$.
 

Recall that the conditions of Karamata's inequality hold for $x_i$ and $y_i$; in particular, there are at most $pk$ elements in $\{y_i\}_{i=1}^N$ equal to $k-1$. If there are fewer than $pk$ such elements, consider the next two elements $b(v_1), b(v_2)$. If both are larger than 0, we combine them in the following sense. If $b(v_1)+b(v_2)\le k-1$, we replace them by $b(v_1)+b(v_2)$ and 0. 
If $b(v_1)+b(v_2)> k-1$, we replace them by $k-1$ and $b(v_1)+b(v_2)-k+1$. After reorganizing the sequence, the first and third conditions obviously still hold. The second condition holds since we changed $y_i$ and $y_{i+1}$ for some $i\le pk$, and then only the $i$th element can increase.
We repeat this procedure as long as there are fewer than $pk$ elements in our sequence that are equal to $k-1$. Then the required conditions hold at the end.

This procedure can end in two ways: either only at most one $b(v)>0$ remains, or we have $pk$ elements equal to $k-1$. We let $B:=\sum_{i=1}^m \frac{\binom{z_i}{r-1}}{r}$ and $R:=z/2$. Note that $B$ is an upper bound on the total contribution of the blue cliques and $R$ is an upper bound on the total contribution of the red edges, but they do not correspond to blue cliques or red edges.

CASE 1. We arrived at a sequence $\{z_i\}$ that has $pk-t$ elements equal to $k-1$ and at most one element equal to $k-s$ for some integers $t>0$ and $s>1$. Then we have that $B$ is at most $(pk-t)\binom{k-1}{r-1}/r+\binom{k-s}{r-1}/r$. Note that the underlying graph of $G$ is $P_k$-free. By the third condition, $z=2e(G)-\sum_{i=1}^m z_i\leq 2\cdot\ex(n,P_k)-\sum_{i=1}^m z_i=(t-1)(k-1)+s-1+q(q-1)$, hence $R$ is at most $(t-1)(k-1)/2+(s-1)/2+q(q-1)/2$.




We have that $(t-1)(k-1)/2\le (t-1)\binom{k-1}{r-1}/r$ and $(s-1)/2\le \frac{\binom{k-1}{r-1}-\binom{k-s}{r-1}}{r}$ by Lemma \ref{lemi}. Therefore, our upper bound on the sum of contributions is at most $pk\binom{k-1}{r-1}/r+\binom{q}{2}$, which completes the proof.

CASE 2. We arrived at a sequence $\{z_i\}$ that has $pk$ elements equal to $k-1$ and some other elements less than $q$. We have that $z':=\sum_{i=pk+1}^m z_i=2e(G)-pk(k-1)-z$. 
Let $z'=a(q-1)+b$ with $b<q-1$, then we further increase the right-hand side of (\ref{eq1}) if we replace the $z_i$'s with $i>pk$ by $a$ elements equal to $q-1$ and an element equal to $b$. 
We have that $z\le q(q-1)-z'$. 

If $q\ge r+2$ and $b\ge r-1$, then $\frac{a\binom{q-1}{r-1}+\binom{b}{r-1}}{r}+\frac{z}{2}\le\frac{(a+1)\binom{q-1}{r-1}}{r}+\frac{z-q+1+b}{2}\le \frac{(a+1)\binom{q-1}{r-1}}{r}+\frac{(q-a-1)(q-1)}{2}\le \frac{q\binom{q-1}{r-1}}{r}$. Here, the first inequality holds because by decreasing $z$ and increasing $b$ by 1, the value of the expression increases by at least $\binom{b}{r-2}/r-1/2$.
The second inequality holds because $z\le q(q-1)-z'$, and the third inequality holds because the expression increases by $(q-a-1)(\frac{\binom{q-1}{r-1}}{r}-(q-1)/2)$, which is non-negative by \textbf{(i)} of Lemma \ref{lemi}.
 If $q\ge r+2$ and $b<r-1$, then observe that $z\le (q-a)(q-1)$, thus $z/2\le (q-a)(q-1)/2\le (q-a)\binom{q-1}{r-1}/r$. Therefore, $\frac{a\binom{q-1}{r-1}+\binom{b}{r-1}}{r}+\frac{z}{2}\le \frac{q\binom{q-1}{r-1}}{r}$, using that $\binom{b}{r-1}=0$.
This implies that our upper bound on $g(G)$ is at most $pk\binom{k-1}{r-1}/r+\binom{q}{r}$, which completes the proof.

If $q\le r+1$, then $b$ is at most $r-1$, thus $\binom{b}{r-1}$ is either 0 or 1. 
We increase $\frac{a\binom{q-1}{r-1}}{r}$ to $a(q-1)/2$ and if $b=r-1$, then we increase $\frac{\binom{b}{r-1}}{r}$ to $b/2$. This way we increased $\frac{a\binom{q-1}{r-1}+\binom{b}{r-1}}{r}+\frac{z}{2}$ to $(z+z')/2\le q(q-1)/2$. Therefore, our upper bound on the sum of contributions is at most $pk\binom{k-1}{r-1}/r+\binom{q}{2}$, which completes the proof.
\end{proof}


We say an $r$-graph $\cH$ is \textit{hamiltonian-connected} if it contains a hamiltonian Berge path between any pair of its vertices.  Let $\delta(\cH)$ denote the minimum degree of vertices in $\cH$. For $S\subseteq V(\cH)$, let $\cH[S]$ denote the subhypergraph of $\cH$ induced by $S$. 

\begin{thm}[Kostochka, Luo and McCourt \cite{klm}]\label{thm3.1}
Let $n \ge r \ge 3$. Suppose $\cH$ is an $n$-vertex $r$-graph such that
    \textbf{(i)} $r \le \frac{n}{2}$ and $\delta(\cH) \ge \binom{\lfloor n/2 \rfloor}{r-1} + 1$, or
    \textbf{(ii)} $n - 1 \ge r > \frac{n}{2} \ge 3$ and $\delta(\cH) \ge r - 1$, or
    \textbf{(iii)} $r = 3$, $n = 5$ and $\delta(\cH) \ge 3$.
Then $\cH$ is hamiltonian-connected.
\end{thm}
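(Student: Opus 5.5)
This is the result of Kostochka, Luo and McCourt~\cite{klm}, which the paper only quotes. If I had to prove it, I would reduce hamiltonian-connectedness of $\cH$ to a statement about its $2$-shadow $\partial\cH$ together with a system of distinct representatives.

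\emph{Case (i).} First I would show that $\partial\cH$ has large minimum degree. If a vertex $v$ has $m$ neighbours in $\partial\cH$, then every hyperedge containing $v$ lies inside $\{v\}\cup N(v)$, so $d_\cH(v)\le\binom{m}{r-1}$. The hypothesis $\delta(\cH)\ge\binom{\lfloor n/2\rfloor}{r-1}+1$ therefore forces $m\ge\lfloor n/2\rfloor+1$. Hence $\delta(\partial\cH)\ge (n+1)/2$, and by the classical Ore--Pósa criterion $\partial\cH$ is hamiltonian-connected as a graph.

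The real work is to turn a hamiltonian $u$--$v$ path $P$ in $\partial\cH$ into a Berge path, that is, to choose distinct hyperedges $h_e\supseteq e$ for the $n-1$ edges $e$ of $P$. I would do this with Hall's theorem on the bipartite graph between $E(P)$ and $E(\cH)$. A set $X$ of path edges violating Hall's condition spans few vertices but many path edges. Its edges can be covered by at most $|X|-1$ hyperedges, while each endpoint of an edge of $X$ lies in more than $\binom{\lfloor n/2\rfloor}{r-1}$ hyperedges. Combining these two facts should give a contradiction, except when the hyperedges covering $X$ are concentrated on a set of about $n/2$ vertices. In that exceptional case I would use the abundance of other hamiltonian $u$--$v$ paths, obtained by Pósa rotations in the dense graph $\partial\cH$, to reroute $P$ so that it uses edges with fresh hyperedges.

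I expect this matching and rerouting step to be the main obstacle. Rather than fixing $P$ first, I would try a single extremal choice: among all hamiltonian $u$--$v$ paths in $\partial\cH$, take one admitting a partial system of distinct representatives of maximum size, and show that any uncovered edge can be fixed by a rotation. This is the Berge analogue of the proof of Dirac's theorem.

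\emph{Case (ii).} Here $r>n/2$, so each hyperedge misses fewer than $n/2$ vertices. Any two hyperedges then share at least $2r-n\ge 1$ vertices, and every pair of vertices lies in many hyperedges once $\delta\ge r-1$. I would build the $u$--$v$ path greedily, a vertex at a time. When the greedy step gets stuck, I would use the fact that at most $r-1$ hyperedges can be blocked near the current end, and exchange one hyperedge along the path for an unused one.

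\emph{Case (iii).} This is a finite check for $3$-graphs on $5$ vertices with minimum degree $3$. It can be done by hand: each such hypergraph has at least $5$ hyperedges, and one verifies directly that a Berge path of length $4$ exists between any two vertices.
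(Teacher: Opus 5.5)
The paper gives no proof of this theorem; it is quoted from Kostochka, Luo and McCourt. Your proposal therefore has to stand on its own, and as written it does not: the steps that carry the content are only announced, and one supporting claim is false.

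In case (i) the shadow reduction is fine. You get $\delta(\partial\cH)\ge\lfloor n/2\rfloor+1$, so Ore's condition makes $\partial\cH$ hamiltonian-connected. The lifting step, however, does not follow from the two facts you combine.

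\begin{itemize}
\item A large $d_\cH(x)$ says nothing about how many hyperedges contain a specific pair $xy$. Two consecutive path edges $xy$, $yz$ may both have codegree $1$ and lie in the same single hyperedge. That is already a Hall violation, however large the vertex degrees are.
\item There is also almost no global slack. For $r=n/2$ the hypothesis is only $\delta(\cH)\ge n/2+1$, so $\cH$ may have only about $n+2$ hyperedges, while a hamiltonian Berge path needs $n-1$ distinct ones.
\end{itemize}

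So the ``contradiction except in an exceptional case'' is not available by counting. The real difficulty is choosing the path and its representatives simultaneously, i.e.\ showing that a rotation can always enlarge a maximum partial system of distinct representatives. That is exactly the part you leave undone.

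In case (ii), the claim that every pair of vertices lies in many hyperedges is false once $r\le n-2$. Take $n=6$, $r=4$, vertices $x,y,a,b,c,d$ and hyperedges $xabc$, $xabd$, $xacd$, $yabc$, $yabd$, $ybcd$. All degrees are at least $3=r-1$, yet $x$ and $y$ share no hyperedge. Only the intersecting property $|h\cap h'|\ge 2r-n\ge 1$ survives, and a greedy construction with an unspecified exchange step is not an argument.

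Your sketch of (ii) also never uses the hypothesis $n\ge 6$, and that hypothesis is necessary. For $n=5$, $r=3$, the hypergraph with hyperedges $\{1,2,3\},\{1,4,5\},\{2,4,5\},\{3,4,5\}$ has $\delta=2=r-1$. It has no hamiltonian Berge path from $4$ to $5$, because both middle pairs of such a path lie only in $\{1,2,3\}$. Any correct argument for (ii) must break on this example, and yours gives no indication where it would.

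Case (iii) is a legitimate finite check, but you have not carried it out. As it stands, the proposal is a plan rather than a proof.
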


Given an $r$-graph $\cH$ with the longest Berge path of length $\ell>r$, we say that a set $S\subseteq V(\cH)$ is \textit{good} if $|N_\cH(S)|\le |S|\binom{\ell}{r-1}/r$, and a 2-set $S$ is \textit{very good} if $|N_\cH(S)|\le \binom{\ell}{r-1}/r+\binom{\ell-1}{r-1}/r$. The proof of the bound ${\rm{ex}}_r(n,{\rm Berge}{\text -}P_k)\leq \frac{n}{k}\binom{k}{r}$ in \cite{B6} goes by showing a good set and applying induction. We need a good set of order exactly $k$ to apply induction.
We will use the following lemma.


\begin{lemma}\label{lemmi}
Assume that the longest Berge path in an 
$r$-graph $\cH$ 
has length $\ell>r$. Then at least one of the following holds.

\begin{itemize}
    \item There is a good set of size 1 and a good set of size 2.

    \item There is a good set of size 2 and a good set of size 3.

    \item The defining vertices of a Berge path of length $\ell$ form a connected component that either contains a Berge cycle of length $\ell+1$, 
    or contains a Berge cycle of length $\ell$ and the remaining vertex of the Berge path has degree 1 in $\cH$.
\end{itemize}

      Moreover, if we found a  good set $S$ of size two this way, and there are at most $\ell-1$ vertices in addition to $S$ in $\cH$ and $\ell>r+1$, then the number of hyperedges is at most $\binom{\ell-1}{r}+\binom{\ell-1}{r-1}/r+\binom{\ell}{r-1}/r$. 

      

\end{lemma}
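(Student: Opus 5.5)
We follow the Győri–Katona–Lemons approach behind Theorem \ref{gykl}, adapted as in Lemma \ref{lemnew3}. Fix a longest Berge path $P=v_1,e_1,v_2,\dots,e_\ell,v_{\ell+1}$ and let $V(P)$ be its defining vertices. The basic observation, analogous to Lemma \ref{lemnew1}, is that any hyperedge containing an endpoint has all its other vertices in $V(P)$, unless it is a defining hyperedge. Otherwise the path extends. The same holds for every vertex that can be made an endpoint of some longest path, possibly by rerouting through a defining hyperedge as in Claim \ref{claimnew}.

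\emph{Step 1: endpoint degrees.} An endpoint $v_1$ lies in non-defining hyperedges, each an $r$-subset of $V(P)$ containing $v_1$, and in at most $\ell$ defining hyperedges. We then split into cases.

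\emph{Step 2: no long cycle through $P$.} If $v_1$ is not adjacent (via some hyperedge) to $v_{i+1}$ while $v_{i}$ is adjacent to $v_{\ell+1}$, the standard Pósa rotation gives a Berge cycle of length $\ell+1$ or $\ell$. In that case, connectivity together with maximality of $\ell$ forces the component to be exactly $V(P)$, or $V(P)$ minus a pendant degree-one vertex. This is the third alternative.

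Otherwise the neighbourhoods of the two endpoints in $V(P)$ are "non-crossing". A counting argument then shows:
\begin{itemize}
\item $v_1$ lies in at most $\binom{\ell}{r-1}/r$ hyperedges, giving a good set of size 1 when $v_1$'s hyperedges avoid $v_{\ell+1}$'s region;
\item $\{v_1,v_{\ell+1}\}$ lies in at most $2\binom{\ell}{r-1}/r$ hyperedges, indeed in at most $\binom{\ell}{r-1}/r+\binom{\ell-1}{r-1}/r$, since the hyperedges at $v_1$ and at $v_{\ell+1}$ live in $r$-subsets of a set missing one forbidden vertex.
\end{itemize}
When the size-1 set fails to be good, we use rotations to produce a third endpoint-type vertex $w$, such as $v_2$ or an outside vertex of $e_1$ as in Claim \ref{claimnew}(i). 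We then show $\{v_1,v_{\ell+1},w\}$ is good and some pair among them is good. The key inequality is that a vertex whose incident hyperedges all lie inside $V(P)$ and avoid a second fixed vertex has degree at most $\binom{\ell-1}{r-1}$, plus at most the defining hyperedges. Dividing by $r$ uses $\ell>r$, with Lemma \ref{lemi}-type estimates absorbing the $\ell$ defining hyperedges.

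\emph{Step 3: the moreover part.} Let $S$ be the good 2-set found this way, with at most $\ell-1$ further vertices. Every hyperedge not meeting $S$ lies among the remaining at most $\ell-1$ vertices, contributing at most $\binom{\ell-1}{r}$ hyperedges. The construction actually gives the very-good bound $|N_\cH(S)|\le \binom{\ell}{r-1}/r+\binom{\ell-1}{r-1}/r$, and adding the two counts gives the claim.

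The main obstacle is Step 2's accounting of defining hyperedges. Non-defining hyperedges at an endpoint are easily bounded by $\binom{\ell}{r-1}$, but converting this to the factor $1/r$ requires a double-counting argument. The plan is a charging scheme in which each hyperedge meeting $S$ is charged to its $r$ vertices inside $V(P)$. We would try first to reproduce the GKL argument verbatim, tracking where exactly a set of size 1, 2 or 3 arises, and verifying that $\ell>r+1$ is what makes the very-good estimate valid.
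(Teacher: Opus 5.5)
Your basic observation about endpoints is fine, and so is reducing the moreover part to finding a very good pair. But Step 2 asserts the heart of the lemma rather than proving it, and as stated it is false.

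\textbf{A counterexample to Step 2.} Take $r=3$, $\ell\ge5$, and let $\cH$ consist of all triples of $\{v_1,\dots,v_\ell\}$ plus the single hyperedge $f=\{v_\ell,v_{\ell+1},u\}$. The vertices $u$ and $v_{\ell+1}$ have degree $1$ and lie in the same hyperedge, so no Berge path of length at least $2$ uses both. Hence the longest Berge path has length $\ell$; take $P$ with $e_\ell=f$. The only defining vertex sharing a hyperedge with $v_{\ell+1}$ is $v_\ell$, so $P$ is non-crossing (reading your condition as $v_1\sim v_{i+1}$ and $v_i\sim v_{\ell+1}$). Yet $d(v_1)=\binom{\ell-1}{2}>\binom{\ell}{2}/3$ and $|N_\cH(\{v_1,v_{\ell+1}\})|=\binom{\ell-1}{2}+1>2\binom{\ell}{2}/3$. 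The good sets here are $\{v_{\ell+1}\}$ and $\{v_{\ell+1},u\}$.

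\textbf{Where the factor $1/r$ really comes from.} It cannot come from charging hyperedges to their vertices. Goodness is a statement about specific vertices, and averaging over $V(P)$ only yields a vertex of degree at most $\binom{\ell}{r-1}$, a factor $r$ too weak. In the paper it comes from a halving argument. Let $A_1$, $A_{\ell+1}$ be the defining vertices sharing a \emph{non-defining} hyperedge with $v_1$, resp.\ $v_{\ell+1}$. One gets $|A_1|+|A_{\ell+1}|\le\ell$ (not $\ell-1$, as Gy\H{o}ri--Katona--Lemons claimed). Also, $v_{\ell+1}$ lies in at most $\ell-|A_1|$ defining hyperedges, since $v_i\in A_1$ together with $v_{\ell+1}\in e_{i-1}$ closes a Berge-$C_{\ell+1}$. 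So only the endpoint with the smaller $A$-set is shown to be good. The resulting bound $\max\{\binom{\lfloor\ell/2\rfloor}{r-1}+\lceil\ell/2\rceil,\ \ell-r+1\}$ still exceeds $\binom{\ell}{2}/3$ for $r=3$ and $\ell\in\{4,5,6,8\}$. This is why the paper needs extra case analysis for $\ell=5$ and for even $\ell$; copying the original argument verbatim would reproduce both of its errors.

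\textbf{The $3$-set.} Your ``key inequality'', $\binom{\ell-1}{r-1}$ plus the defining hyperedges, is far above $\binom{\ell}{r-1}/r$. The paper produces a $3$-set only when neither endpoint lies in a non-defining hyperedge, so $|N_\cH(\{v_1,v_{\ell+1}\})|\le\ell$. It also needs both endpoints to lie in more than $\lfloor\ell/2\rfloor$ defining hyperedges. That confines the non-defining neighbours of the rotated endpoint $v_{i+1}$ to at most $\lceil\ell/2\rceil-1$ vertices.

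\textbf{The third alternative.} Your route here also fails: a Berge-$C_\ell$ on $\ell$ vertices of $P$ says nothing about the component. The example above contains such cycles, yet every longest path misses a vertex of its component. For $r\ge4$, use all $r$-subsets of $\{v_1,\dots,v_\ell\}$ and enlarge $f$ by $r-3$ vertices of $\{v_2,\dots,v_{\ell-1}\}$; then $P$ is crossing in your sense as well, and the conclusion still fails. In the paper the $C_\ell$ alternative arises only through a recursion:
\begin{itemize}
\item Once the endpoint $v_{\ell+1}$ is known to be good, a second good vertex comes from rotating at $v_{\ell+1}$ (through some $e_i$ with $i<\ell$, or through a non-defining hyperedge) and reapplying the one-vertex result.
\item If instead $d(v_{\ell+1})=1$ and $e_\ell\subseteq V(P)$, one deletes $v_{\ell+1}$ and $e_\ell$ and reruns the argument in $\cH_0$, whose longest path has length $\ell-1$ (using Lemma \ref{lemnew1} when $\ell-1=r$).
\item A Berge-$C_\ell$ in $\cH_0$, together with the pendant vertex $v_{\ell+1}$, is exactly the third bullet.
\end{itemize}

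\textbf{The moreover part.} It does not follow from ``the construction gives a very good pair''. When the pair consists of two good vertices found as above, only $|N_\cH(S)|\le 2\binom{\ell}{r-1}/r$ is available. The paper uses $|V(\cH)|=\ell+1$ and deletes $v_{\ell+1}$. If $\cH_0$ contains a Berge-$C_\ell$, there are three cases:
\begin{itemize}
\item if $d(v_{\ell+1})=1$, the third bullet holds;
\item if $\cH_0$ is hamiltonian-connected, one closes a Berge-$C_{\ell+1}$;
\item otherwise Theorem \ref{thm3.1} gives a low-degree vertex of $\cH_0$, which forms a very good pair with $v_{\ell+1}$.
\end{itemize}
Without a $C_\ell$ in $\cH_0$, the paper either finds a good vertex of $\cH_0$ or bounds $e(\cH)$ directly by $\binom{\ell-2}{r}+\ell-1+\binom{\ell}{r-1}/r$. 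None of these ingredients appears in your plan.
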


Note that the moreover part follows if we find a very good set of size 2.
Note also that in \cite{B6}, a good set of unspecified size was found.
We borrow a lot from the proof in \cite{B6}, but there are two small mistakes in the argument there that we also have to correct.
We remark that in the case $k\ge r+3$, one can find a good set of size 1 with a little bit more work. That would be enough for our purposes, but we also have to deal with the case $k=r+2$.

\begin{proof}[\bf Proof] 
Consider a Berge path $v_1,e_1,v_2,\dots,e_\ell, v_{\ell+1}$ of length $\ell$. First, assume there is a Berge cycle with defining vertices $v_1,\dots,v_{\ell+1}$. Without loss of generality, the defining hyperedges are $e_1,\dots, e_\ell$
and a hyperedge $e_{\ell +1}$ containing $v_{\ell+1}$ and $v_1$. 
If there is a hyperedge $h$ that is not a defining hyperedge and contains $v_i$ and a vertex $v$ that is not a defining vertex, then $v,h,v_i,e_i,v_{i+1},\dots, v_{i-1}$ is a Berge path of length $\ell+1$, a contradiction. If a defining hyperedge $e_i$ contains a non-defining vertex $v$, then $v,e_i,v_{i+1},\dots, v_{i-1}, e_{i-1}, v_i$ is a Berge path of length $\ell+1$, a contradiction. Therefore, the third bullet point holds for this cycle. Hence, we are done unless 
there is no Berge-$C_{\ell+1}$ in $\cH$ on the vertices $v_1,v_2,\dots, v_{\ell+1}$.


    Let $\cH'$ denote the set of hyperedges in $\cH$ that are not the defining hyperedges of our Berge path. Note that the moreover part follows if the good set of size 2 is very good.

    CASE 1. No hyperedge of $\cH'$ contains $v_1$ or $v_{\ell+1}$.
   Then $N_{\cH}(\{v_1,v_{\ell+1}\})\leq \ell$. 
   Observe that since $\ell>r$, we have $\binom{\ell}{r-1}/r\ge \binom{\ell}{\ell-2}/(\ell-1)=\ell/2$. Therefore, $\{v_1,v_{\ell+1}\}$ is a good set. Moreover, $\{v_1,v_{\ell+1}\}$ is a very good set
since $\binom{\ell}{r-1}/r+\binom{\ell-1}{r-1}/r\ge \ell$ when $\ell > r+1$.

We have found a good set of size $2$. Next, we will find a good set of size $1$ or $3$.
   If any of $v_1$ and $v_{\ell+1}$ is contained in at most $\lfloor\frac{\ell}{2}\rfloor$ defining hyperedges, then it will be a good set of size 1, and we are done. Now assume that each of $v_1$ and $v_{\ell+1}$ is contained in at least $\lfloor\frac{\ell}{2}\rfloor+1$ defining hyperedges. 
   Take a hyperedge $e_i$ that contains $v_{\ell+1}$. Then $v_1,e_1,v_2, \ldots, v_i,e_i, v_{\ell+1}, e_\ell, v_\ell, \ldots, v_{i+1}$ is a Berge-$P_\ell$ with the same defining hyperedges. If $v_{i+1}$ is contained only in the defining hyperedges, then $\{v_1,v_{\ell+1},v_{i+1}\}$ is a good set of size three, and we are done. We claim that any non-defining hyperedge that contains $v_{i+1}$ must only contain defining vertices, otherwise we can get a Berge-$P_{\ell+1}$. Let $A_{i+1}$ be the set of defining vertices that are in a non-defining hyperedge with $v_{i+1}$. Note that if some $v_j$ is in $A_{i+1}$, then $v_1$ cannot be in $e_j$, otherwise we get a Berge-$C_{\ell+1}$. Thus, $|A_{i+1}| \leq \ell - (\lfloor\frac{\ell}{2}\rfloor+1)= \lceil \frac{\ell}{2}\rceil -1$. So, there are at most $\binom{\lceil \frac{\ell}{2}\rceil -1}{r-1}$ non-defining hyperedges containing $v_{i+1}$. Therefore, $\{v_1, v_{\ell+1}, v_{i+1}\}$ is incident to at most $\ell+\binom{\lceil \frac{\ell}{2}\rceil -1}{r-1}\leq \ell+\binom{\ell}{r-1}/r$ hyperedges, hence forms a good set of size 3.


CASE 2. 
For each Berge path of length $\ell$, at least one of the endvertices is contained in a non-defining hyperedge. Note that such non-defining hyperedges contain only the defining vertices, otherwise we get a Berge-$P_{\ell+1}$. 
     Let $U$ be the defining vertex set of the Berge path $v_1,e_1, \ldots, v_l,e_\ell,v_{\ell+1}$. Let $A_1$ denote the set of vertices in $U\setminus \{v_1\}$ that are contained in a hyperedge of $\cH'$ together with $v_1$ and $A_{\ell+1}$ denote the set of vertices in $U\setminus \{v_{\ell+1}\}$ that are contained in a hyperedge of $\cH'$ together with $v_{\ell+1}$. 
    A hyperedge in $\cH'$ containing $v_1$ and $v_{\ell+1}$ would create a Berge cycle of length $\ell+1$. Meanwhile, if $v_i\in A_{\ell+1}$ and $v_{i+1}\in A_1$, then $v_i,e_{i-1},v_{i-1},\dots,e_1,v_1,h_1,v_{i+1},e_{i+1},\dots,e_\ell,v_{\ell+1},h_2,v_i$ is a Berge cycle of length $\ell+1$, where $v_1,v_{i+1}\in h_1\in \cH'$ and $v_i,v_{\ell+1}\in h_2\in \cH'$. Therefore, $|A_1|$ defining vertices are forbidden from $A_{\ell+1}$. Note that $v_1$ and $v_{\ell+1}$ are also not in $A_{\ell+1}$.
   The proof in \cite{B6} concludes that $|A_1|+|A_{\ell+1}|\le \ell-1$, but this is incorrect, since $v_1$ may be one of the $|A_1|$ forbidden vertices, thus we only have $|A_1|+|A_{\ell+1}|\le \ell$.


    Clearly, the above imply that at least one of $|A_1|\ge r-1$ and $|A_{\ell+1}|\ge r-1$ holds, and there are at most $\binom{|A_1|}{r-1}$ hyperedges in $\cH'$ that contain $v_1$ and analogously at most $\binom{|A_{\ell+1}|}{r-1}$ hyperedges in $\cH'$ that contain $v_{\ell+1}$. 
  Observe that if $v_i\in A_1$, and $e_{i-1}$ contains $v_{\ell+1}$, then $v_i,e,v_1,e_1,\dots, v_{i-1},e_{i-1}, v_{\ell+1},e_\ell, v_\ell,\dots, v_i$ is a Berge cycle of length $\ell+1$, where $\{v_1,v_i\}\subseteq e\in \cH'$. 
  Therefore, $v_{\ell+1}$ is contained in at most $\ell-|A_1|$ defining hyperedges and analogously $v_1$ is contained in at most $\ell-|A_{\ell+1}|$ defining hyperedges. Thus, if the smaller of $A_1$ and $A_{\ell+1}$ contains $i$ elements, then the degree of an endpoint is at most $\binom{i}{r-1}+\ell-i$. If $i\ge r-1$, then increasing $i$ increases this number, thus its maximum is $\binom{\lfloor\ell/2\rfloor}{r-1}+\lceil \ell/2 \rceil$.
  If $i<r-1$, then $i=0$, say, $A_{\ell+1}=\emptyset$. Then $|A_1|\ge r-1$, thus the degree of $v_{\ell+1}$ is at most $\ell-r+1$. Moreover, if $i\ge r-1$, then $\ell\ge 2r-2$.






    We have obtained that there is a vertex, say $v_{\ell+1}$ of degree at most $x$, where $x=\ell-r+1$ if $\ell<2r-2$ and $x=\max\{\binom{\lfloor\ell/2\rfloor}{r-1}+\lceil\ell/2\rceil,\ell-r+1\}$ if $\ell\ge 2r-2$. 

    

    
    We need to show that $x\le y:=\binom{\ell}{r-1}/r$. Let us remark that \cite{B6} arrived at a similar inequality and left checking the details to the reader, but their inequality does not hold for $r=3$, $k=6$.

    We claim that if $r\ge 4$ and $\ell$ increases by 1, then $y-x$ does not decrease. Clearly, $y$ increases by $\binom{\ell}{r-2}/r$, while $x$ increases by at most $\binom{\lfloor\ell/2\rfloor}{r-2}+1$. If $\ell<2r-2$, then $x$ increases by 1 and we are done. Otherwise, we have $\binom{\lfloor\ell/2\rfloor}{r-2}\le\binom{\ell/2}{r-2}=\frac{\ell(\ell-2)\dots (\ell-2r+6)}{2^{r-2}(r-2)!}<\binom{\ell}{r-2}/r$.
    Therefore, it is enough to check that $x\le y$ holds for the smallest possible $\ell$, which is $\ell=2r-2$. In that case, we have to show that $r\le \binom{2r-2}{r-1}/r$. This can be easily checked.




    It is left to deal with the case $r=3$. In that case, we have that $y=\ell(\ell-1)/6$ (in which case $\ell\ge 4$), while for odd $\ell$, either $x\le (\ell-1)(\ell-3)/8+(\ell+1)/2$, or $x=\ell-2$. Then one can easily check that $x\le y$ if $\ell\ge 7$. 
    
    If $\ell=5$, then 
    $y=10/3$, while it is possible that $x=4$. However, in this case we have that $|A_1|,|A_6|\ge 2$. Recall that if $v_i\in A_1$, then $v_{i-1}\not\in A_6$ and $e_{i-1}$ does not contain $v_6$. Analogously, if $v_i\in A_6$, then $v_{i+1}\not\in A_1$ and $e_i$ does not contain $v_1$. 
    
    First, assume that $v_2\not\in A_1$. Then we either have that $v_2\in A_6$ (in which case $A_1=\{v_4,v_5\}$) or $v_2\not \in A_6$. In the first case, we have that $A_6=\{v_2,v_5\}$, $v_1$ is contained in $e_1,e_3,e_4$ and $v_6$ is contained in $e_1,e_2,e_5$. Then $v_1v_4v_5$ is a defining hyperedge, thus $v_1$ is not contained in any non-defining hyperedge, hence $v_1$ is contained in $3$ hyperedges, a contradiction. In the second case, we have that $A_1\cup A_6\subseteq \{v_3,v_4,v_5\}$, which is possible only if $A_1=A_6=\{v_3,v_5\}$ (and $v_1$ is contained in $e_1,e_2,e_4$, $v_6$ is contained in $e_1,e_3,e_5$) or $A_1=\{v_3,v_4\}$ and $A_6=\{v_4,v_5\}$ (and $v_1$ is contained in $e_1,e_2,e_3$, $v_6$ is contained in $e_1,e_4,e_5$). In the last case, $v_1v_3v_4$ is a defining hyperedge, thus $v_1$ is not contained in any non-defining hyperedge, a contradiction as above. 
    If $A_1=A_6=\{v_3,v_5\}$, then the hyperedges are $v_1v_2v_6(e_1)$, $v_1v_2v_3(e_2)$, $v_3v_4v_6(e_3)$, $v_1v_4v_5(e_4)$, $v_5v_6x(e_5)$, $v_1v_3v_5$, $v_3v_5v_6$, for some $x$.
    Then $x$ must be a defining vertex, otherwise we use $v_3v_5v_6$ for the edge $v_5v_6$ in the original Berge path, and then continue with $v_6xv_5$ to $x$, obtaining a Berge path of length 6, a contradiction. 
    Then $x\in\{v_1,v_2,v_4\}$. If $x\in\{v_1,v_2\}$, then again, we use $v_3v_5v_6$ for the edge $v_5v_6$ in the original Berge path, and then the non-defining hyperedge containing $v_6$ either contains $v_1$, forming a Berge cycle of length 6, a contradiction, or contains $v_2$, arriving to the previous case. If $x=v_4$, then we can find a Berge-$C_6$ ($v_5\xrightarrow{v_3v_5v_6} v_6\xrightarrow{e_5} v_4\xrightarrow{e_3} v_3\xrightarrow{e_2} v_2\xrightarrow{e_1} v_1\xrightarrow{v_1v_3v_5} v_5$), a contradiction. 

    We have obtained that $v_2\in A_1$, and then analogously $v_5\in A_6$. We can replace $e_1$ by the non-defining hyperedge $h$ containing $v_1$ and $v_2$. Then the third vertex of $e_1$ must be a defining vertex $v_i$. Let $A_1'=A_1\cup \{v_i\}$. If $v_i\not\in A_1$, then $|A_1'| \geq 3$. If $v_i\in A_1$, then  there is a non-defining hyperedge $\{v_1, v_i, x\}$, where $x\neq v_2$ since $\{v_1, v_i, x\}\neq e_1$. Therefore, $|A_1'| \geq 3$ in this case as well.
Analogously, let $A_6'=A_6\cup\{v_j\}$, where $v_j$ is the vertex of $e_5$ different from $v_5$ and $v_6$. Then we have that $|A_6'| \geq 3$. Observe that $v_q\in A_6'$ and $v_{q+1}\in A_1'$ is impossible, by the same argument that works for $A_6$ and $A_1$. This shows that $|A_1'|+|A_6'|\le 5$, a contradiction.

    For $r=3$ and even $\ell$, we claim that there is a vertex of degree at most $\ell(\ell-2)/8+\ell/2-1$. This clearly holds if $x=\ell-2$, thus we can assume that $x=\ell(\ell-2)/8+\ell/2$. We are done unless $v_{\ell+1}$ has degree exactly $x$. In that case, we have $|A_1|=|A_{\ell+1}|=\ell/2$ and each defining hyperedge contains exactly one of $v_1$ and $v_{\ell+1}$. Indeed, this second condition has been established for $e_1$ and $e_\ell$, the other defining hyperedges cannot contain both $v_1$ and $v_{\ell+1}$ since $r=3$, and hence each of $v_1$ and $v_{\ell+1}$ is contained in $\ell/2$ defining hyperedges. Moreover, each subset of $A_1$ of size $r-1=2$ forms a non-defining hyperedge when extended with $v_1$.
    
   We know that $v_i\in A_1$ for some $i>2$. Then, as we have observed, $e_{i-1}$ cannot contain $v_{\ell+1}$, thus contains $v_1$. Consider now the Berge path $v_2,e_2,v_3,\dots, v_{i-1},e_{i-1},v_1,h,v_i,e_i, \dots, v_{\ell+1}$, where $h$ is a non-defining hyperedge of our Berge path that contains $v_1$ and $v_i$. This Berge path has length $\ell$, thus we could start with this Berge path instead of our Berge path. If we do not find a vertex of degree at most $\ell(\ell-2)/8+\ell/2-1$ there, then each defining hyperedge of this Berge path contains one of the endpoints $v_2$ or $v_{\ell+1}$. More precisely, $\ell/2$ defining hyperedges of the new path contain $v_2$, but we also know that only $e_2$ and $h$ may contain $v_2$, thus we are done if $\ell>4$. If $\ell=4$, we are done unless $h$ consists of $v_1,v_2,v_i$. In that case $A_1=\{v_2,v_i\}$. 
   


   If $i=3$, then $h=\{v_1,v_2,v_3\}$, and hence $A_1=\{v_2,v_3\}$. Since $v_3 \in A_1$, $v_{l+1} \notin e_2$, but then $e_2$ must contain $v_1$. Thus, $e_2=\{v_1,v_2,v_3\}=h$, a contradiction.

   
    If $i=4$, then $A_{\ell+1}=A_1=\{v_2,v_4\}$. We claim that $e_3$ cannot contain any of $v_1$ and $v_{\ell+1}$. Otherwise, we can find a Berge-$C_{\ell+1}$ ($v_3\rightarrow v_2\rightarrow v_5\rightarrow v_4\rightarrow v_1\rightarrow v_3$) if $v_1\in e_3$, and a Berge-$C_{\ell+1}$ ($v_4\rightarrow v_5\rightarrow v_3\rightarrow v_2\rightarrow v_1\rightarrow v_4$) if $v_5\in e_3$, a contradiction. Recall that $e_{3}$ does not contain $v_{\ell+1}$ but contains $v_1$ (since $v_4\in A_1$), which is a contradiction. 
    If $i = 5$, it is easy to see that there is a Berge-$C_{\ell+1}$, a contradiction. Thus, we are done, since $\ell(\ell-2)/8+\ell/2-1\le y$ for even $\ell$. 
   
    We have found a good set of size 1. More precisely, we showed that at least one end vertex of any Berge path of length $\ell$ forms a good set. 
    Without loss of generality, suppose that in the Berge path $v_1,e_1,v_2,\dots,e_\ell, v_{\ell+1}$, the vertex $v_{\ell+1}$ forms a good set of size 1. Next, we will find a good set of size 2.

    Assume now that $v_{\ell+1}$ is in $e_i$ for some $i<\ell$. Then consider the Berge path $v_1,e_1,\dots, v_i$, $e_i,v_{\ell+1},e_\ell, v_\ell, \dots, e_{i+1},v_{i+1}$. It has length $\ell$, thus at least one of $v_1$ and $v_{i+1}$ has degree at most $\binom{\ell}{r-1}/r$ and forms a good set together with $v_{\ell+1}$. If $v_{\ell+1}$ is contained in a non-defining hyperedge $h$, then that contains only defining vertices. Let $v_i\in h$, then $v_1,e_1,\dots,e_{i-1},v_i,h,v_{\ell+1},e_\ell$, $v_\ell,\dots, v_{i+1}$ is a Berge path of length $\ell$, thus at least one of its endpoints  has degree at most $\binom{\ell}{r-1}/r$ and forms a good set together with $v_{\ell+1}$. 


    Assume now that $v_{\ell+1}$ is contained in only one hyperedge, i.e., $e_\ell$. If $e_\ell$ contains a non-defining vertex, then that can replace $v_{\ell+1}$, and by the above reasoning, we find either a good set of size 2, or a vertex of degree 1, which forms a good set with $v_{\ell+1}$. 
    
    Next, suppose that $e_\ell$ contains no non-defining vertices. 
    Let us delete now $v_{\ell+1}$ and the single hyperedge $e_\ell$ containing it and repeat the argument of the proof so far. If there is in the resulting hypergraph $\cH_0$ a Berge-$P_\ell$ and a Berge cycle of length $\ell+1$ containing it, we are done. 
    If there is a Berge path of length $\ell$, and CASE 1 holds inside $\cH_0$, then we find a good set $\{v,v'\}$ in $\cH_0$. Note that $v$ and $v'$ are the two endpoints of this Berge path of length $\ell$. This implies that $v,v'\notin e_\ell$, otherwise we would obtain a Berge-$P_{\ell+1}$ in $\cH$ together with the hyperedge $e_\ell$ and the vertex $v_{\ell+1}$, a contradiction. Hence, $\{v,v'\}$ is also a good set in $\cH$, and we are done. 
    If there is a Berge path of length $\ell$, and CASE 2 holds inside $\cH_0$, then we find at least one endvertex $v$ that forms a good set. 
    Clearly, $\{v_{\ell+1},v\}$ is a good set as $\binom{\ell}{r-1}/r+1\leq 2\binom{\ell}{r-1}/r$. 
    

    If there is no Berge path of length $\ell$ in $\cH_0$ and $\ell-1>r$, then we repeat this argument with the Berge path $v_1,e_1, \dots, e_{\ell-1},v_\ell$ in $\cH_0$. If we get a Berge-$C_\ell$, then together with $v_{\ell+1}$, the third bullet point holds. If CASE 1 holds in $\cH_0$, then we find the required good sets $S$ with $|N_{\cH_0}(S)|\le |S|\binom{\ell-1}{r-1}/r\le |S|\binom{\ell}{r-1}/r-1$, thus $|N_{\cH}(S)|\le |S|\binom{\ell}{r-1}/r$. If CASE 2 holds in $\cH_0$, then we find a good vertex, which forms a very good set with $v_{\ell+1}$.
    If we find that $v_1, \dots,v_\ell$ form a connected component in $\cH_0$, then there are two possibilities.
    
    
    If $v_1, \dots,v_\ell$ form a connected component that contains a vertex of degree 1, then that vertex with $v_{\ell+1}$ forms a good set. If $v_1, \dots,v_\ell$ form a connected component in $\cH_0$ which contains a Berge cycle with length $\ell$, then with $v_{\ell+1}$ they form a connected component in $\cH$ with a Berge cycle of length $\ell$ and a vertex of degree 1. In each of the above cases, we found the desired configuration. Finally, assume that $\ell-1=r$. If there is a Berge cycle of length $\ell$, then the third bullet point holds and we are done. Otherwise, we can apply Lemma \ref{lemnew1} to find a vertex $v$ only in defining hyperedges. Then $|N_{\cH}(\{v,v_{\ell+1}\})|\le \ell$ and we are done.

    
   
   It is left to prove the moreover part. Recall that in CASE 1, we have proved the moreover part (we found a very good 2-set), thus we can assume we are in CASE 2, in particular, $v_{\ell+1}$ forms a good set.
   Assume that $\cH$ has exactly $\ell+1$ vertices and delete $v_{\ell+1}$. Then, in the remaining hypergraph $\cH_0$ the longest path has length $\ell -1$. If there is a Berge cycle of length $\ell$ on the $\ell$ vertices in $\cH_0$, then there are two cases. If $v_{\ell+1}$ has degree 1 in $\cH$, then we do not need to show the moreover part, because the third bullet point of the statement holds. If $v_{\ell+1}$ is in at least two hyperedges, and $\cH_0$ is hamiltonian-connected, then we pick two distinct vertices $v_i$ from one of the hyperedges containing $v_{\ell+1}$ and $v_j$ from the other. Then the hamiltonian Berge path connecting $v_i$ and $v_j$ in $\cH_0$ extends to a Berge cycle of length $\ell+1$, a case we have dealt with in the first paragraph of the proof. If $\cH_0$ is not hamiltonian-connected, then it contains a vertex $v$ of degree at most $\binom{\lfloor \ell/2\rfloor}{r-1}$ or $r-2$ or $2$ by Theorem \ref{thm3.1}. Then in each case, $\{v,v_{\ell+1}\}$ is a very good set.
    
    Assume now that there is no cycle of length $\ell$ in $\cH_0$.
    If in $\cH_0$ each Berge path of length $\ell-1$ has an endpoint contained in some non-defining hyperedges, then we can find a good vertex by repeating the above argument. The good vertex there is contained in at most $\binom{\ell-1}{r-1}/r$ hyperedges. Recall that $d_{\cH}(v_{\ell+1})\leq \binom{\ell}{r-1}/r$. Then $e(\cH)\leq \binom{\ell}{r-1}/r+\binom{\ell-1}{r-1}/r+\binom{\ell-1}{r}$, and thus we are done. 
    If there is a Berge path of length $\ell-1$ in the remaining graph such that none of its end vertices is contained in a non-defining hyperedge, then there are at most $\ell-1$ hyperedges containing those two vertices and at most $\binom{\ell-2}{r}$ hyperedges avoiding those two vertices in $\cH_0$. Therefore, there are at most $\binom{\ell-2}{r}+\ell-1+\binom{\ell}{r-1}/r$ hyperedges in $\cH$. This is at most the desired bound if and only if $\ell-1\le \binom{\ell-2}{r-1}+\binom{\ell-1}{r-1}/r$. Since $\ell>r+1$, we have $\ell-2>r-1$, thus the first term of the right-hand side is at least $\ell-2$. The second term of the right-hand side is at least $(\ell-1)/r\ge 1$, thus we are done. 
\end{proof}

Now we are ready to prove Theorem \ref{main} for $k>r+1$. Recall that it states that if $n=pk+q$ with $q<k$, then $\ex_r(n,\textup{Berge-}P_k)=p\binom{k}{r}+\binom{q}{r}$. 

\begin{proof}[\bf Proof of Theorem \ref{main}]
For the lower bound, we partition the $n$ vertices into $p$ sets of size $k$ and a single $q$-set. In each set, take all possible subsets of size $r$ as hyperedges of the hypergraph. The resulting $r$-graph has exactly $p\binom{k}{r}+\binom{q}{r}$ hyperedges and clearly contains no copy of any Berge-$P_k$. 

Let us continue with the upper bound. If $q=0$, we are done by Theorem \ref{gykl}.
If $q\ge r+2$ or $q=1$, we are done by Lemma \ref{lem2.1} and Theorem \ref{thm2.4}. From now on, we assume that $2\le q\le r+1$.

Assume that the statement does not hold and consider a counterexample on the least number of vertices. It is an $n$-vertex Berge-$P_k$-free $r$-graph $\cH$ for some $n=pk+q$. The statement is trivial for $n\le k$, thus we can assume $n>k$. We can also assume that $\cH$ is connected, otherwise at least one of its connected components is a smaller counterexample. If $\cH$ contains a good $k$-set, we delete the good $k$-set to obtain a hypergraph $\cH^*$, then $\cH^*$ is not a counterexample, thus has at most $(p-1)\binom{k}{r}+\binom{q}{r}$ hyperedges. We have deleted at most $k\binom{k-1}{r-1}/r=\binom{k}{r}$ hyperedges, thus $\cH$ contains at most $p\binom{k}{r}+\binom{q}{r}$ hyperedges, a contradiction.

If $\cH$ contains
good sets of size 1 and 2, or good sets of size 2 and 3, then we delete such a good set with parity equal to the parity of $k$ to obtain $\cH_1$. 
Then if $\cH_1$ has a good set of size 2 that satisfies the moreover part of Lemma \ref{lemmi}, we delete such a good set to obtain $\cH_2$, and so on.
If we can delete exactly $k$ vertices this way, then we found a good $k$-set, a contradiction. 

Therefore, there are at most $k-2$ vertices that we can delete this way, let $S$ denote their set and $\cH'$ denote the remaining hypergraph. 


\begin{clm}
Each component of $\cH'$ has order at most $k$ and contains a Berge path of length more than $r$, except at most one isolated vertex.   
\end{clm}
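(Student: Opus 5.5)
We argue by contradiction with the maximality of the deletion process. Suppose some component $C$ of $\cH'$ violates the claim. We will exhibit in $\cH'$ a good set of size $2$ satisfying the moreover part of Lemma \ref{lemmi}, or show that the deleted set can still be enlarged. Either way the process could continue, contradicting the choice of $S$. Throughout, goodness is measured with respect to $\ell\le k-1$, the length of a longest Berge path. Since $\cH$, and hence $\cH'$, is Berge-$P_k$-free, $\ell$ is well defined and at most $k-1$.

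\emph{Step 1: isolated vertices.} If $\cH'$ had two isolated vertices $u,w$, then $N_{\cH'}(\{u,w\})=\emptyset$. So $\{u,w\}$ is trivially a (very) good $2$-set, and the moreover bound holds vacuously. It could therefore be deleted, a contradiction. Hence at most one isolated vertex remains.

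\emph{Step 2: nontrivial components with short paths.} Let $C$ contain a hyperedge, and suppose its longest Berge path has length at most $r$. Then part (ii) of Theorem \ref{gykl}, applied to $C$ (or directly Lemma \ref{lemnew1}-type arguments when the path has length exactly $r$), gives $e(C)\le |V(C)|(r-1)/(r+1)$. So the average degree in $C$ is below $r$. We then pick two vertices whose total degree is at most about $2r$. Since $k\ge r+2$, Lemma \ref{lemi} gives $\binom{k-1}{r-1}/r\ge (k-1)/2\ge (r+1)/2$, which makes this pair very good. The count in the moreover part is then immediate, so this $2$-set could be deleted, a contradiction.

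\emph{Step 3: components with a long path.} Now let $C$ have longest Berge path of length $\ell$ with $r<\ell\le k-1$, and apply Lemma \ref{lemmi} to $C$. In the third bullet, the defining vertices of a longest path form the whole component, so $|V(C)|=\ell+1\le k$, as required. In the first two bullets we obtain a good $2$-set $T\subseteq V(C)$. If $|V(C)|\le \ell+1$, the order bound already holds. Otherwise we must show that $T$ (or another pair found by the same argument) still qualifies for deletion. Here the idea is that $C$ with more than $\ell+1$ vertices, together with connectivity, lets us extend a longest path. This should force Case 1 of the proof of Lemma \ref{lemmi}, where a \emph{very} good pair $\{v_1,v_{\ell+1}\}$ appears. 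That pair is deletable regardless of the number of remaining vertices.

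\emph{Expected obstacle.} The main difficulty is the last step: matching exactly the deletion rule (good $2$-sets ``satisfying the moreover part'') with what Lemma \ref{lemmi} delivers in a component of order larger than $\ell+1$. We would first try to show directly that every good $2$-set produced in Case 2 of that proof is in fact very good. If that fails, we would fall back on the order count: a component with more than $k$ vertices yields, by the minimal-counterexample choice, that $\cH'$ restricted to it is not a counterexample. This would let us locate an additional deletable pair.
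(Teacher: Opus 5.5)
Your framework matches the paper's: contradict the maximality of the deletion, note that two isolated vertices form a good pair, and force the third bullet of Lemma \ref{lemmi} for components with a long path. However, Step 2 has a genuine gap.

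First, Theorem \ref{gykl}(ii) does not apply there. A component whose longest Berge path has length exactly $r$ is only Berge-$P_{r+1}$-free, whereas the bound $|V(C)|(r-1)/(r+1)$ is the statement for Berge-$P_r$-free hypergraphs. The complete $r$-graph on $r+1$ vertices already violates it, with $r+1>r-1$ hyperedges. The correct bound is $e(C)\le |V(C)|$, from part (i) with $k=r+1$.

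More seriously, no degree-sum argument can finish when $k=r+2$. Then a pair is good only if $|N(S)|\le 2\binom{r+1}{r-1}/r=r+1$, and very good only if $|N(S)|\le (r+3)/2$. Averaging only gives two vertices with $d(u)+d(w)\le 2r$. In the complete $r$-graph on $r+1$ vertices every pair meets all $r+1$ hyperedges. So your claim that the pair is very good is false, and the pair is good only because of overlaps that a degree sum cannot see. For $k\ge r+3$ your averaging would work with $e(C)\le |V(C)|$, since $2r\le (r+2)(r+1)/3\le 2\binom{k-1}{r-1}/r$; the whole difficulty is $k=r+2$.

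The missing idea is to extract a pair with $|N(S)|\le r+1$ from the structure of the component, as the paper does:
\begin{itemize}
\item A Berge-$C_{r+1}$ forces $C$ to have only $r+1$ vertices, hence at most $r+1$ hyperedges.
\item Otherwise, a Berge-$C_r$ gives, by Lemma \ref{lemnew2}, $r+1$ vertices lying in only $r$ hyperedges.
\item Otherwise, a Berge path of length $r$ lets you apply Lemma \ref{lemnew3}. This yields $r-1\ge 2$ vertices with $|N(S)|\le 1$, or $r+1$ vertices with $|N(S)|\le r+1$.
\item If the longest path is shorter than $r$, its two endpoints lie only in its defining hyperedges.
\end{itemize}
Plain goodness suffices here, with no need for very good pairs. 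The moreover condition only has content when $\cH'$ has a Berge path of length more than $r+1$. Such a path lies in another component with at least $\ell+1$ vertices, so the hypothesis ``at most $\ell-1$ further vertices'' fails.

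In Step 3 your worry is misplaced, and the proposed remedy would not work. A longest path cannot be extended, and nothing forces Case 1 of the proof of Lemma \ref{lemmi}. The moreover part is a conditional whose hypothesis is that at most $\ell-1$ vertices remain besides $S$. When $|V(C)|>\ell+1$ it is vacuous, so any good $2$-set produced by the first two bullets is automatically eligible for deletion. Hence, once the process has stopped, every component with a path longer than $r$ falls under the third bullet. It therefore consists of the $\ell+1\le k$ defining vertices of a longest path. This is exactly the paper's argument; neither very good pairs nor your minimal-counterexample fallback is needed.
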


\begin{proof}[\bf Proof of Claim]
Let us assume indirectly that there is a component $C$ of $\cH'$ that is $P_{r+1}$-free.
    If there is a Berge-$C_{r+1}$ in $C$, then no hyperedge of $\cH'$ contains a defining vertex from the Berge-$C_{r+1}$ and a vertex that is not a defining vertex. Therefore, $C$ is of order $r+1$, hence clearly contains a good set of size 2, since $C$ has at most $\binom{r+1}{r}$ hyperedges. 
    If there is a Berge-$C_r$ in $C$, then Lemma \ref{lemnew2} gives us a good set of size 2. 
    If there is no Berge-$C_{r+1}$ nor Berge-$C_{r}$ but there is a Berge path of length $r$ in $C$, then we can apply Lemma \ref{lemnew3} to find a good set of size 2, a contradiction. 

    If there is no Berge path of length $r$ in $C$, then consider a longest Berge path $P$. Note that any non-defining hyperedge containing the endpoints of $P$ can only contain the defining vertices in $P$. If $P$ has fewer than $r$ defining vertices, this is impossible, thus the endpoints are contained only in defining hyperedges. 
    If $P$ has $r$ defining vertices, and the hyperedge $e$ formed by those $r$ vertices is in $\cH$ but not a defining hyperedge of this Berge path, then we can find a Berge-$C_r$, a contradiction. 
    Therefore, the endvertices of $P$ are contained only in the defining hyperedges, thus they form a good set.

We have established that each component contains a Berge path of length at least $r$, except a component that consists of only one vertex, since in that case we only find one endpoint of $P$. Clearly, two isolated vertices would form a good set. It is left to show that each component has at most $k$ vertices.
We pick a longest Berge path, then the third bullet point of Lemma \ref{lemmi} holds, thus the vertices of that path form a component. In particular, the component has at most $k$ vertices (since the number of defining vertices in the longest Berge path is at most $k$), and the same holds for the rest.

Finally, we need to show that the good sets found this way satisfy the moreover part of Lemma \ref{lemmi}. Observe that we need to deal only with the case where there is a Berge path of length more than $r+1$ in $\cH'$, while here we found good sets either when assuming that the longest Berge path has length at most $r$, or applying Lemma \ref{lemmi} itself.
\end{proof}

Therefore, we can assume that each component of $\cH'$, we say $W_1,\dots,W_t$, has order at most $k$. Moreover, we can apply Lemma \ref{lemmi}, and the third bullet point holds. This implies that for each component $W_i$ and each vertex of $W_i$, there is a Berge path of length at least $|W_i|-2$ starting at that vertex.
Observe that if a component has order less than $x$, where $x$ is the smallest integer such that $\binom{x-1}{r-1} > \binom{k-1}{r-1}/r$, then its vertices have degree at most $\binom{k-1}{r-1}/r$. In particular, $x\geq \lceil\frac{k+1}{2}\rceil+1$, since $x=\frac{k}{2}+1$ does not satisfy $r(x-1)\cdots(x-r+1)>(k-1)\cdots(k-r+1)$. 
Moreover, if there is a component of size at least 2 and at most $r+1$, then since any set $S$ of size at least two in a component of size $r+1$ has $|N_\cH(S)|\le r+1\le |S|\binom{k-1}{r-1}/r$, we find a good set of size 2, a contradiction. So we have that each component of $\cH'$, except for the at most one isolated vertex has order at least $x$ and at least $r+2$.

Let $S'$ denote the union of the component of order 1 (if exists) with $S$, then any subset of $S'$ of size $k$ that contains $S$ is a good set, thus $|S'|<k$. Let us delete those components too, i.e., we delete $S'$ from $\cH$ to obtain $\cH''$ with components $W_1,\dots,W_{t'}$ ($t'\leq t$).



\begin{clm}
    The components of $\cH''$ are in different connected components in $\cH$.
\end{clm}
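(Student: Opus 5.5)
We argue by contradiction. Suppose two distinct components $W_i$ and $W_j$ of $\cH''$ lie in the same connected component of $\cH$. Then there is a shortest chain of hyperedges in $\cH$ joining $V(W_i)$ to $V(W_j)$. Every hyperedge of $\cH$ that meets two different components of $\cH''$, or meets a component of $\cH''$ and also $S'$, must contain a vertex of $S'$. This is because $\cH''$ keeps every hyperedge avoiding $S'$, and components are maximal. So we may fix a hyperedge $h$ that contains a vertex $w\in V(W_i)$ and a vertex of $S'$.

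The plan is to extend a long Berge path through $h$. By the Claim and the discussion before it, each $W_i$ satisfies the third bullet point of Lemma \ref{lemmi}, and from every vertex of $W_i$ there starts a Berge path inside $W_i$ of length at least $|W_i|-2$. In fact we want length $|W_i|-1$, a hamiltonian path of $W_i$ ending at $w$. This holds when $W_i$ has a Berge cycle through all its vertices. If $W_i$ has only a Berge cycle of length $|W_i|-1$ plus a degree-1 vertex $u$, we take the path that starts at $u$ and runs around the cycle to end at any prescribed cycle vertex. Prepending $h$ then gives a Berge path of length $|W_i|$ ending in $S'$.

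Next we continue the path through $S'$ and into a second component. The vertices of $S$ were removed as good sets, and the removal can be ordered so that the graph stays connected to the remaining pieces. Following the chain of hyperedges from $W_i$ to $W_j$, we get a Berge path from a hamiltonian path of $W_i$, through at least one vertex of $S'$, into $W_j$, and then along a path of length at least $|W_j|-2$ inside $W_j$. Its length is at least $|W_i|+1+|W_j|-2$. Every nontrivial component has order at least $x\ge\lceil\frac{k+1}{2}\rceil+1$, so this length is at least $k$. This gives a Berge-$P_k$ in $\cH$, a contradiction.

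The main obstacle is the middle portion. The connecting hyperedges may reuse hyperedges already used inside $W_i$ or $W_j$; this cannot actually happen, since hyperedges of $\cH''$ avoid $S'$. Also, the connection through $S'$ may need several steps. We only need one step out of $W_i$ and one step into $W_j$, so we take a shortest connection, whose internal vertices all lie in $S'$, and verify that the defining vertices remain distinct. If the two components are joined only through a long detour in $S'$, the detour only lengthens the path, so this case is fine. The estimate $|W_i|+|W_j|-1\ge 2x-1\ge k$ then closes the argument. For small $k$ relative to $r$ we can also use $|W_i|\ge r+2$.
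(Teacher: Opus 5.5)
Your strategy is the paper's: take a shortest Berge path in $\cH$ between two distinct components of $\cH''$, note that its hyperedges all meet $S'$ (so they differ from the hyperedges of $\cH''$) and its internal defining vertices lie in $S'$, then attach long Berge paths inside $W_i$ and $W_j$. However, the length estimate $|W_i|+|W_j|-1$ rests on two claims that are false as stated.

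\emph{The hamiltonian path ending at $w$.} Suppose $W_i$ is a Berge cycle of length $|W_i|-1$ plus a vertex $u$ of degree $1$. A degree-$1$ vertex cannot be an internal defining vertex, so any hamiltonian Berge path of $W_i$ ending at $w\neq u$ must start at $u$. It then enters the cycle through a vertex of the unique hyperedge containing $u$. Going around the cycle from that entry vertex ends at one of its cycle-neighbours, and nothing guarantees you can end at an arbitrary prescribed $w$. All you actually have from an arbitrary vertex is a path of length $|W_i|-2$.

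\emph{Passing through $S'$.} The connection need not have a defining vertex in $S'$. A single hyperedge may contain $w\in W_i$, $w'\in W_j$ and a vertex of $S'$, so the connecting segment can have length $1$.

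\emph{Internal vertices of the shortest connection.} The assertion that a shortest connection has all internal vertices in $S'$ needs an argument. For a fixed pair $W_i,W_j$, a shortest path may pass through a third component $W_m$. You must minimize over all pairs of distinct components, or note that such a $W_m$ yields a shorter path between distinct components, as the paper does. The sentence about ordering the removal of $S$ to keep connectivity is unsupported and should simply be dropped; nothing uses it.

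None of this is fatal. With the correct bounds, the glued Berge path has length at least
$(|W_i|-2)+1+(|W_j|-2)\ge 2x-3\ge 2\lceil\frac{k+1}{2}\rceil-1\ge k$, using $|W_i|,|W_j|\ge x\ge\lceil\frac{k+1}{2}\rceil+1$. The vertices stay distinct because the three pieces live in $W_i$, $S'$ and $W_j$ respectively. The hyperedges stay distinct because the connecting ones meet $S'$ while the others lie in $\cH''$. This is exactly the paper's count: a path of length $\lceil\frac{k+1}{2}\rceil-1$ on each side plus at least one connecting hyperedge. Replace your estimate by this one and the argument goes through.
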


\begin{proof}[\bf Proof of Claim]
Assume otherwise and consider a shortest Berge path in $\cH$ connecting $u\in W_i$ and $v\in W_j$ for some $i\neq j$. If an internal vertex $w$ of this Berge path is in $\cH''$, then there is a shorter Berge path between $u$ and $w$ and between $w$ and $v$. At least one of these Berge paths is between vertices of distinct components of $\cH''$, a contradiction. If a hyperedge of this Berge path is in $\cH''$, then one of its defining vertices $w$ is neither $u$, nor $v$ but is in $\cH''$, thus this is also impossible.

Recall that there is 
inside $W_i$ in $\cH''$
a Berge path of length at least $\lceil(k+1)/2\rceil-1$ ending in $u$. 
Similarly, there is a Berge path of length at least $\lceil(k+1)/2\rceil-1$ in $\cH''$ starting in $v$ inside $W_j$. No hyperedge or vertex is used in both of these Berge paths, since $W_i$ and $W_j$ are distinct components of $\cH''$. Moreover, no hyperedge or vertex distinct from $u$ and $v$ is used in these Berge paths or the path between $u$ and $v$, since these two Berge paths use vertices and hyperedges in $\cH''$, while the other path uses vertices and hyperedges not in $\cH''$. Combining these with the path between $u$ and $v$, we obtain a Berge path of length at least $k$, a contradiction. 
\end{proof}


Recall that if $\cH$ is disconnected, then one of its components also must be a counterexample, a contradiction to the minimality of $\cH$. Therefore, there is only one component, $\cH$ consists of $W_1$ with $r+2\le |W_1|\le k$ and $S'$ with $|S'|\le k-1$.


In particular, $n\le 2k-1$. Moreover, if $|W_1|=k$, then by the third bullet point of Lemma \ref{lemmi}, either $W_1$ forms a connected component of $\cH$ (thus $n=k$ and we are done), or $W_1$ contains a vertex of degree 1 (in which case there was no isolated vertex in $\cH'$ because these two vertices would form a good set). We move that vertex to $S'$. This way we obtain that $r+1 \le |W_1|< k$ and $|S'|\le k-1$, thus $n\le 2k-2$.





Let us order the vertices by placing the vertices of $W_1$ first, then the vertices of $S'\setminus S$ and then the vertices of $S$, in reversed order of their removal in our earlier procedure. Let $u_i$ denote the $i$th vertex in this order.
We add the vertices one by one in this order, except that we add at the same time the vertices of $S$ that were removed together. 
We claim that when we add $u_{k-1}$,  then we add at most $\binom{k-2}{r-1}/r$ hyperedges,
and when we add $u_i$ alone with $i \ge k$, then we add at most $\binom{k-1}{r-1}/r$ hyperedges. When we add $u_i$ and $u_{i+1}$ together and $i\ge k$, then we add at most $2\binom{k-1}{r-1}/r$ hyperedges, and when we add $u_i$, $u_{i+1}$ and $u_{i+2}$ together and $i\ge k$, then we add at most $3\binom{k-1}{r-1}/r$ hyperedges.



If $i\geq k$, then this holds since each vertex in $S'$ has degree at most $\binom{k-1}{r-1}/r$ in $\cH$. If we deleted $u_i$ together with $u_{i+1}$ or together with $u_{i+1}$ and $u_{i+2}$ and $i\ge k$, then this holds since we deleted a good set.


If $u_i\in S'\setminus S$ and $i<k$, then we add no hyperedges by the definition of $S'$.

Also, later, we know that we only deleted good 2-sets together except possibly the first time which may be just one or three vertices at a time. However, we reach to this only if $n=k+1$.

If $u_i\in S$ and $i<k$, then the Berge path containing $u_i$ has length at most $i-1$, thus by Lemma \ref{lemmi}, $u_i$ has degree at most $\binom{i-1}{r-1}/r$. If we add $u_{k-1}$ together with two other vertices, then we deleted a good set of size 3. This can happen only at the very first step, thus we had at most $k+1$ vertices altogether. This implies that $n=k+1$ (i.e., $q=1$), a case we have already dealt with. If we add $u_{k-1}$ together with one other vertex, thus there are at most $\binom{k-2}{r}+\binom{k-2}{r-1}/r$ hyperedges inside the set of the first $k-1$ vertices. 
Therefore, the number of hyperedges in $\cH$ is at most 
\begin{equation*}
\begin{split}\binom{k-2}{r}+ \binom{k-2}{r-1}/r+(q+1)\binom{k-1}{r-1}/r\le  \binom{k-1}{r}+(q+1)\binom{k-1}{r-1}/r=\\ \binom{k}{r}+(q+1-r)\binom{k-1}{r-1}/r.\end{split}\end{equation*}

If $q\le r-1$, then we are done. 
Assume now that $r\le q\le r+1$.

\smallskip
CASE 1. $|W_1|\le k-2$ and $r\ge 4$ or $r=q=3$. Then the number of hyperedges is at most $\binom{|W_1|}{r}+\sum_{i=|W_1|}^{k-2} \binom{i}{r-1}/r+(q+1)\binom{k-1}{r-1}/r\le \binom{k-2}{r}+\binom{k-2}{r-1}/r+(q+1)\binom{k-1}{r-1}/r\le \binom{k-2}{r}+\binom{k-2}{r-1}/r+\binom{k-1}{r-1}+(q-r+1)\binom{k-1}{r-1}/r$. Recall that $r+1\le |W_1|\le k-2$, thus $k\ge r+3$.

CASE 1.1. $r=q$.
We have $\binom{k-1}{r-1}=(k-1)\binom{k-2}{r-1}/(k-r)$, hence $\binom{k-2}{r-1}/r+(q-r+1)\binom{k-1}{r-1}/r=(1+(k-1)/(k-r))\binom{k-2}{r-1}/r=(2k-r-1)\binom{k-2}{r-1}/(r(k-r))\le \binom{k-2}{r-1}$. Therefore, the number of hyperedges in $\cH$ is at most $\binom{k}{r}$, completing the proof.

CASE 1.2. $q=r+1$ and $r\ge 4$.
We have $\binom{k-1}{r-1}=(k-1)\binom{k-2}{r-1}/(k-r)$, hence $\binom{k-2}{r-1}/r+(q-r+1)\binom{k-1}{r-1}/r=(1+(2k-2)/(k-r))\binom{k-2}{r-1}/r=(3k-r-2)\binom{k-2}{r-1}/(r(k-r))\le \binom{k-2}{r-1}+r+1$. Indeed, the inequality holds without the $r+1$ term if $r\ge 5$ or if $r=4$ and $k\ge 10$. It is straightforward to check the inequality if $r=4$ and $8\le k\le 9$. If $r=4$ and $6\le k\le 7$, we use that the number of hyperedges is at most $\binom{k-2}{r}+ \binom{k-2}{r-1}/r+(q+1)\binom{k-1}{r-1}/r$. It is easy to see that in both cases this is at most $\binom{k}{r}+\binom{q}{r}$.
Therefore, the number of hyperedges in $\cH$ is at most $\binom{k}{r}+\binom{q}{r}$, completing the proof.

\smallskip
CASE 2. $|W_1|=k-1$ and $r\ge 4$ or $r=q=3$. Note that either the vertices of $W_1$ are the defining vertices of a Berge cycle of length $k-1$ with vertices $v_1,\dots, v_{k-1}$ in this order, such that the defining hyperedges of this cycle are in $\cH'$, or $W_1$ contains a single vertex $v$ of degree 1 in $\cH'$. In the latter case, we move $v$ to $S'$ and proceed with CASE 1. In the following part of the proof, the only property of $W_1$ that we use is that it contains a Berge cycle of length $k-1$.
Since $\cH$ is connected, for every $u\in S'$ there is a shortest Berge path from $u$ to some $v\in W_1$. Since a Berge path of length $k-2$ inside $W_1$ starts from $v$, the Berge path from $u$ to $v$ must have length 1 (using that this Berge path cannot have internal defining vertices from $W_1$, thus cannot have hyperedges from $\cH'$). If a hyperedge $h$ contains two vertices $v,v'$ from $S'$, then no other hyperedge contains $v$, nor $v'$. In this case, we can delete $h,v,v'$ from $\cH$ to obtain a smaller counterexample to the statement of the theorem, a contradiction. 

We obtained that each vertex $v\in S'$ is contained in a Berge-$P_{k-1}$ in $\cH$, where the other defining vertices are the vertices of $W_1$. If $v_i$ is in a hyperedge together with $v$, then $v_{i+1}$ cannot be in a hyperedge together with $v'\in S'$ if $v'\neq v$. Let $B$ denote the set of vertices $v_i$ such that $v_i$ and $v_{i-1}$ or $v_{i+1}$ are both in some hyperedge with some $v\in S'$. Then they belong to at most one such $v$. Let $b_v$ denote the number of vertices in $B$ that form at least one hyperedge with $v$, then $\sum_{v\in S'}b_v=|B|$. For the other vertices of $W_1$, two consecutive ones cannot be in hyperedges with vertices of $S'$. Therefore, each vertex $v\in S'$ forms hyperedges with at most $\lfloor \frac{k-1-|B|}{2}\rfloor+b_v$ vertices. The number of hyperedges containing some vertex from $S'$ is at most $\sum_{v\in S'}\binom{\lfloor \frac{k-1-|B|}{2}\rfloor+b_v}{r-1}$. Clearly, this is maximized if for some $v$ we have $b_v=|B|$, thus this  is at most $q\binom{\lfloor \frac{k-1-|B|}{2}\rfloor}{r-1}+\binom{\lfloor \frac{k-1+|B|}{2}\rfloor}{r-1}$. By the convexity of the binomial function, this is maximized either at $|B|=0$ or at $|B|=k-1$.

In the second case, we have that the total number of hyperedges is at most $\binom{k-1}{r}+\binom{k-1}{r-1}=\binom{k}{r}$ and we are done. In the first case,
the number of hyperedges is at most $\binom{k-1}{r}+(q+1)\binom{\lfloor (k-1)/2\rfloor}{r-1}$. Note that compared to Lemma \ref{lemmi}, the improvement is that the defining hyperedges of the Berge path do not contain $v$, since they are in $\cH'$. When $q=r$, we have that the number of hyperedges is at most $\binom{k-1}{r}+(q+1)\binom{\lfloor (k-1)/2\rfloor}{r-1}=\binom{k-1}{r}+(r+1)\binom{\lfloor (k-1)/2\rfloor}{r-1}$. We have that $\binom{\lfloor (k-1)/2\rfloor}{r-1}\le \binom{k-1}{r-1}/2^{r-1}\le\binom{k-1}{r-1}/(r+1)$ for $r\ge 3$, thus we are done in this case. 
When $q=r+1$, we have that the number of hyperedges is at most $\binom{k-1}{r}+(q+1)\binom{\lfloor (k-1)/2\rfloor}{r-1}=\binom{k-1}{r}+(r+2)\binom{\lfloor (k-1)/2\rfloor}{r-1}$. We have that $\binom{\lfloor (k-1)/2\rfloor}{r-1}\le \binom{k-1}{r-1}/2^{r-1}\le\binom{k-1}{r-1}/(r+2)$ if $r\ge 4$, thus we are done in this case. 

\smallskip
CASE 3.
$r=3,q=4$. 

CASE 3.1. $|W_1|=k-1$. Then, we can use the following result from Case 2: every vertex in $S'$ is contained by at most $\binom{\lfloor (k-1)/2\rfloor}{r-1}$ hyperedges in $\cH$. This is the only property of $W_1$ that we use.

We claim that $\cH[W_1]$ is not hamiltonian-connected. 
Otherwise, if a hyperedge $h$ contains two vertices $v,v'$ not in $W_1$, then this leads to a contradiction as in CASE 2. 
Assume first that there are two vertices $v_1,v_2$ outside $W_1$ with hyperedges intersecting $W_1$.
Then there exist two vertices $u_1,u_2\in W_1$ and two hyperedges $h_1,h_2\in E_1$ such that $v_1,u_1\in h_1$ and $v_2,u_2\in h_2$. If $\cH[W_1]$ is hamiltonian-connected, we can find a Berge-$P_{k-2}$ from $u_1$ to $u_2$ in $\cH[A_1]$. Then the Berge-$P_{k-2}$ and $h_1,h_2$ form a Berge-$P_{k}$, a contradiction.

Assume now that there is only one vertex $v$ outside $W_1$ with a hyperedge $h$ intersecting $W_1$. Then there is a hyperedge $h'$ outside $W_1$ that contains $v$, and it is easy to see that a Berge path of length $k-2$ inside $W_1$ is extended by $h$ and $h'$ to a Berge path of length $k$, a contradiction.

We obtained that $\cH[W_1]$ is not hamiltonian-connected. 
Thus, by Theorem \ref{thm3.1}, there exists a vertex $w\in W_1$ with $d_{\cH[W_1]}(w)\leq\binom{(k-1)/2}{2}$. Note that $d_{\cH[W_1]}(v)\leq\binom{k-2}{2}$ for any $v\in W_1\backslash\{w\}$ and recall that $d_{\cH}(v)\leq\binom{\lfloor (k-1)/2\rfloor}{2}$ for any $v\in S$. 
Then the number of hyperedges of $\cH$ is 
$$\sum_{v\in S}d_{\cH}(v)+\frac{\sum_{v\in W_1}d_{\cH[W_1]}(v)}{3}\leq 5\binom{\lfloor (k-1)/2\rfloor}{2}+\frac{\binom{(k-1)/2}{2}+(k-2)\binom{k-2}{2}}{3}\leq \binom{k}{3}+\binom{4}{3},$$
thus we are done in this case. 

\smallskip
CASE 3.2. $|W_1|\le k-3$. Then we can use from CASE 1 the upper bound $\binom{|W_1|}{r}+\sum_{i=|W_1|}^{k-2} \binom{i}{r-1}/r+(q+1)\binom{k-1}{r-1}/r$ on the number of hyperedges. This is at most $\binom{k-3}{3} +\binom{k-3}{2}/3+\binom{k-2}{2}/3+  5\binom{k-1}{2}/3 \leq \binom{k}{3} + \binom{4}{3}$ and we are done.


CASE 3.3. $|W_1|=k-2$.  If $S'\neq S$, then $u_{k-1}$ was an isolated component after removing $S$, thus the number of hyperedges is at most $\binom{k-2}{3}+0+5\binom{k-1}{2}/3\leq \binom{k}{3} + \binom{4}{3}$ and we are done.
If $S'=S$, then in the deletion process deleted the two vertices $u_{k}, u_{k-1}$ together at the last step. If there are at most $k$ hyperedges incident to $u_k$ and $u_{k-1}$ when we delete them, then our upper bound on the number of hyperedges turns to $\binom{k-2}{3}+k+4\binom{k-1}{2}/3\leq \binom{k}{3} + \binom{4}{3}$ and we are done. If there are at least $k+1$ such hyperedges, then at most $k-2$ of them contains both $u_k$ and $u_{k-1}$, since the third vertex of such a hyperedge is in $W_1$. Out of the at least three other such hyperedges (each containing exactly one of $u_k$ and $u_{k-1}$), at least two contain the same vertex of $\{u_{k-1},u_{k}\}$. Without loss of generality, $h_1$ and $h_2$ contain $u_{k-1}$, such that for both $h_1$ and $h_2$, the other two vertices are in $W_1$. Let $x\neq y$ be vertices with $x\in h_1$, $y\in h_2$, and let  $W_1'=W_1\cup \{u_{k-1}\}$. If $W_1$ is hamiltonian-connected, then 
a hamiltonian path connecting $x$ and $y$ in $W_1$, together with $h_1$ and $h_2$ forms a Berge cycle of length $k-1$. Then we continue as in CASE 3.1. Recall that the only property of $W_1$ used there is a degree condition obtained in CASE 2, and in CASE 2 we used only the existence of a Berge cycle of length $k-1$.

Finally, if $W_1$ is not hamiltonian-connected, then by Theorem \ref{thm3.1} there is a vertex of degree at most $\binom{(k-2)/2}{2}$ in $W_1$. This improves our upper bound on the number of hyperedges to $\binom{(k-2)/2}{2}+ \binom{k-3}{3} + \binom{k-2}{2}/3 + 5\binom{k-1}{2}/3  \leq \binom{k}{3} + \binom{4}{3}$ and we are done. 
\end{proof}

\bigskip
\textbf{Funding}: 
The research of Cheng is supported by the National Natural Science Foundation of China (Nos. 12131013 and 12471334), the Shaanxi Fundamental Science Research Project for Mathematics and Physics (No. 22JSZ009) and the China Scholarship Council (No. 202406290241). 

The research of Gerbner is supported by the National Research, Development and Innovation Office - NKFIH under the grant KKP-133819 and by the János Bolyai scholarship.

The research of Miao is supported by the China Scholarship Council (No. 202406770056).

The research of Zhou is supported by the National Natural Science Foundation of China (Nos. 12271337 and 12371347) and the China Scholarship Council (No. 202406890088).

\end{document}